\documentclass[a4paper,11pt]{article}
\usepackage[latin1]{inputenc}
\usepackage[english]{babel}
\usepackage{amsmath}
\usepackage{amsfonts}
\usepackage{amssymb}
\usepackage{epsfig}
\usepackage{amsopn}
\usepackage{amsthm}
\usepackage{color}
\usepackage{graphicx}
\usepackage{enumerate}
\setlength{\oddsidemargin}{0.25in} \addtolength{\hoffset}{0cm}
\addtolength{\textwidth}{2.5cm} \addtolength{\voffset}{-1cm}
\addtolength{\textheight}{1cm}
\newtheorem{theorem}{Theorem}[section]
\newtheorem{corollary}[theorem]{Corollary}
\newtheorem{lemma}[theorem]{Lemma}
\newtheorem{proposition}[theorem]{Proposition}
\newtheorem{definition}[theorem]{Definition}
\newtheorem{remark}[theorem]{Remark}
\newtheorem*{theorem*}{Theorem}
\newtheorem*{lemma*}{Lemma}
\newtheorem*{remark*}{Remark}
\newtheorem*{definition*}{Definition}
\newtheorem*{proposition*}{Proposition}
\newtheorem*{corollary*}{Corollary}
\numberwithin{equation}{section}
%

\newcommand{\real}{\mathbb{R}}



\let\ced=\c         



\def\e{\varepsilon}        

\def\t{\tau}


\def\cd{{\cal D}}

\def\cf{{\cal F}}


\def\qed{\,\unskip\kern 6pt \penalty 500
\raise -2pt\hbox{\vrule \vbox to8pt{\hrule width 6pt
\vfill\hrule}\vrule}\par}
\definecolor{darkblue}{rgb}{0.05, .05, .65}
\definecolor{darkgreen}{rgb}{0.1, .65, .1}
\definecolor{darkred}{rgb}{0.8,0,0}
\newcommand{\beqn}{\begin{equation}}
\newcommand{\eeqn}{\end{equation}}
\newcommand{\bear}{\begin{eqnarray}}
\newcommand{\eear}{\end{eqnarray}}
\newcommand{\bean}{\begin{eqnarray*}}
\newcommand{\eean}{\end{eqnarray*}}
%


\begin{document}

\title{\huge \bf Large time behavior for a porous medium equation in a nonhomogeneous medium with critical density}

\author{
\Large Razvan Gabriel Iagar\,\footnote{Dept. de An\'{a}lisis
Matem\'{a}tico, Universitat de Valencia, Dr. Moliner 50, 46100,
Burjassot (Valencia), Spain, \textit{e-mail:} razvan.iagar@uv.es},
\footnote{Institute of Mathematics of the Romanian Academy, P.O. Box
1-764, RO-014700, Bucharest, Romania.}
\\[4pt] \Large Ariel S\'{a}nchez,\footnote{Departamento de Matem\'{a}tica Aplicada,
Universidad Rey Juan Carlos, M\'{o}stoles, 28933, Madrid, Spain,
\textit{e-mail:} ariel.sanchez@urjc.es}\\ [4pt] }
\date{}
\maketitle

\begin{abstract}
We study the large time behavior of solutions to the porous medium
equation in nonhomogeneous media with critical singular density
$$
|x|^{-2}\partial_{t}u=\Delta u^m, \quad \hbox{in} \
\real^N\times(0,\infty),
$$
where $m>1$ and $N\geq3$. The asymptotic behavior proves to have
some interesting and striking properties. We show that there are
different asymptotic profiles for the solutions, depending on
whether the continuous initial data $u_0$ vanishes at $x=0$ or not.
Moreover, when $u_0(0)=0$, we show the convergence towards a profile
presenting a discontinuity in form of a shockwave, coming from an
unexpected asymptotic simplification to a conservation law, while
when $u_0(0)>0$, the limit profile remains continuous. These
phenomena illustrate the strong effect of the singularity at $x=0$.
We improve the time scale of the convergence in sets avoiding the
singularity. On the way, we also study the large-time behavior for a
porous medium equation with convection which is interesting for
itself.
\end{abstract}

\

\noindent {\bf AMS Subject Classification 2010:} 35B33, 35B40,
35K10, 35K67, 35Q79.

\smallskip

\noindent {\bf Keywords and phrases:} porous medium equation,
non-homogeneous media, singular density, asymptotic behavior,
radially symmetric solutions, nonlinear diffusion.

\section{Introduction}

The goal of this paper is to study the asymptotic behavior of
solutions to the following porous medium equation in nonhomogeneous
media with critical singular density:
\begin{equation}\label{eq1}
|x|^{-2}\partial_{t}u(x,t)=\Delta u^m(x,t), \quad
(x,t)\in\real^{N}\times(0,\infty),
\end{equation}
with $m>1$ and $N\geq3$. An important feature of this equation is
the influence of the density that is at the same time singular at
$x=0$ and degenerate at infinity, giving rise to very interesting
and unexpected results.

Equations of type \eqref{eq1} with general densities, more precisely
\begin{equation}\label{eq2}
\varrho(x)\partial_{t}u(x,t)=\Delta u^m(x,t), \quad
(x,t)\in\real^{N}\times(0,\infty),
\end{equation}
where $\varrho$ is a density function with suitable behavior, have
been proposed by Kamin and Rosenau in a series of papers \cite{KR81,
KR82, KR83} to model thermal propagation by radiation in
non-homogeneous plasma. Afterwards, a huge development of the
mathematical theory associated to Eq. \eqref{eq2} begun, usually
under conditions such as
$$
\varrho(x)\sim|x|^{-\gamma}, \quad \hbox{as} \ |x|\to\infty,
$$
for some $\gamma>0$, as for example in the following papers
\cite{E90, EK94, RV06, Ted, RV08, RV09, KRV10} where its qualitative
properties and asymptotic behavior are studied. In particular, along
these references, the basic existence and regularity properties are
proved under suitable conditions for the initial data, and a
detailed study for the asymptotic behavior for $\gamma\neq2$ has
been done. Thus, it has been noticed that for $\gamma\in(0,2)$, the
solutions have similar qualitative properties to the ones of the
standard porous medium equation
\begin{equation}\label{PME}
u_t=\Delta u^m,
\end{equation}
see \cite{RV06, RV09}, while for $\gamma>2$ they are quite different
\cite{KRV10}. Thus, the value $\gamma=2$ is critical. A first step
in the study of Eq. \eqref{eq2} with $\varrho(x)\sim|x|^{-2}$ at
infinity, but $\varrho$ regular at $x=0$, has been done in the
recent paper \cite{NR}, having as starting point some conjectures
and comments in \cite{KRV10}.

On the other hand, concerning the asymptotic behavior of solutions
to Eq. \eqref{eq2}, it is shown that the profiles are special
solutions of Eq. \eqref{eq1}, giving thus rise to the natural
problem of the study of the pure power density case
$\varrho(x)=|x|^{-\gamma}$. A special feature of Eq. \eqref{eq1},
besides its general interest for classifying asymptotic profiles for
the general case \eqref{eq2}, is the fact that a strong singularity
appears at $x=0$. As we will see, the presence of this singular
coefficient (in contrast with the above mentioned papers where
$\varrho(x)$ is supposed regular at $x=0$), introduces various
unexpected mathematical phenomena, as the appearance of two regimes
of convergence, different profiles for different initial data only
near $x=0$, and backward evolution of the profiles.

Recently, in a previous work \cite{IS12}, the authors proved some of
these interesting and striking features for the easier case of the
linear equation
\begin{equation}\label{NHheat}
|x|^{-2}\partial_{t}u(x,t)=\Delta u, \quad
(x,t)\in\real^N\times(0,\infty),
\end{equation}
where all the profiles are explicit and one can use the theory of
the heat equation. Moreover, a formal study of the radially
symmetric solutions to Eq. \eqref{eq1} for general $\gamma$ has been
performed in \cite{IRS}, including some mappings that will be useful
in the sequel for the case $\gamma=2$.

Before stating and explaining our main results, we want to mention
that we only consider dimensions $N\geq3$, letting apart the cases
$N=1$ and $N=2$ for a further work, due to some differences with
respect to the general functional theory.

\medskip

\noindent \textbf{Main results.} In the present paper, we deal with
the Cauchy problem associated to Eq. \eqref{eq1} with initial
condition
\begin{equation}\label{initdata}
u_0(x):=u(x,0)\in L^{1}_2(\real^N), \quad u_0\geq0,
\end{equation}
where $N\geq3$ and, as usual,
$$
L^1_{2}(\real^N):=\left\{h:\real^N\mapsto\real, \ h \ {\rm
measurable}, \int_{\real^N}|x|^{-2}h(x)\,dx<\infty \right\}.
$$
In Section \ref{sec.wp} we make a review of the notion of
\emph{strong} solution to \eqref{eq1} and the well-posedness results
we need. In particular, for any initial condition $u_0$ as above,
there exists a strong solution $u$ having some additional
qualitative properties, see Theorem \ref{th.wp}.

As it will become clear from the subsequent analysis, there is a
branching point for the large-time behavior of general solutions to
\eqref{eq1}. Similar to the linear case $m=1$ studied in our
previous paper \cite{IS12}, a big difference is related to whether
$u_0(0)=0$ or $u_0(0)\neq0$. But let us state rigorously our
results.

\medskip

\noindent \textbf{1. Initial data $u_0$ such that $u_0(0)=0$.} Let
us introduce the following weighted norm:
$$
\|h\|_{p,N}=\int_{\real^N}|x|^{-N}|h(x)|^p\,dx
$$
We state first a simple convergence result in the previous integral
norm, which is interesting by itself but can be also seen as a
preliminary.
\begin{theorem}\label{th.1}
Let $u$ be a \emph{radially symmetric} solution to Eq. \eqref{eq1}
with initial condition $u_0$ satisfying\eqref{initdata} and
furthermore
\begin{equation}\label{masscond}
u_0(0)=0, \quad M_{u_0}:=\int_{\real^N}|x|^{-N}u_0(x)\,dx<\infty.
\end{equation}
Then, for any $1\leq p<\infty$, we have
\begin{equation}\label{pconv2}
\lim\limits_{t\to\infty}t^{(p-1)/mp}\|u(t)-F(t)\|_{p,N}=0,
\end{equation}
where
\begin{equation}\label{profile1}
F(x,t)=\left\{\begin{array}{ll}0, & {\rm for} \
|x|<e^{-kt^{1/m}},\\t^{-1/m}\left[-\frac{1}{m(N-2)}\log|x|t^{-1/m}\right]_{+}^{1/(m-1)},
& {\rm for} \ |x|\geq e^{-kt^{1/m}},\end{array}\right.
\end{equation}
and $k=k(M_{u_0})$ has a precise value depending on the mass
$M_{u_0}$ above.
\end{theorem}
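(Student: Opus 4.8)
The plan is to reduce \eqref{eq1} to an already-understood equation via a change of variables, and then transfer a known convergence statement back. First I would pass to radial coordinates $r=|x|$ and introduce the logarithmic variable $\xi=\log r$ together with a suitable power rescaling of $u$; under the substitution $u(r,t)=r^{\alpha}v(\xi,t)$ with $\alpha$ chosen so that the weight $|x|^{-2}$ and the radial Laplacian combine correctly, the equation for $v$ should become (after also rescaling time) a one-dimensional porous medium equation, or — and this is the point emphasized in the abstract — in the large-time limit an \emph{effective conservation law} in the variable $\xi$. The mass $M_{u_0}=\int_{\real^N}|x|^{-N}u_0\,dx$ is exactly the conserved quantity in these coordinates, since $|x|^{-N}\,dx$ becomes (up to the constant $\omega_{N-1}$) the measure $d\xi$ on the line. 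This explains both the appearance of the weighted norm $\|\cdot\|_{p,N}$ (it is the usual $L^p$-norm in $\xi$) and why $k$ depends on $M_{u_0}$.

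Next I would identify the profile $F$ in \eqref{profile1} as the self-similar / travelling-type solution of the limiting equation in the $(\xi,t)$ picture: writing $F$ in radial-log coordinates, the condition $|x|\geq e^{-kt^{1/m}}$ becomes $\xi\geq -kt^{1/m}$, i.e. the free boundary travels linearly in $\xi$ with speed governed by $k$, and the profile $[\,-\frac{1}{m(N-2)}(\xi-\log t^{1/m})\,]_+^{1/(m-1)}$ is precisely the Barenblatt-type or N-wave-type profile of the reduced PME/conservation law after the $t^{-1/m}$ temporal scaling. I would verify directly that $F$ is an exact (weak) solution of that reduced equation and that its mass equals $M_{u_0}$, which pins down $k=k(M_{u_0})$.

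The convergence \eqref{pconv2}, with the sharp rate $t^{(p-1)/mp}$, would then follow from the asymptotic theory for the reduced equation: in the rescaled variables the solution converges to $F$ in $L^p(d\xi)$, and undoing the scaling produces exactly the weighted norm and the stated power of $t$ as the correction factor. Here I would invoke the standard toolkit — $L^1$-contraction and comparison for the reduced equation, scaling invariance, compactness of the rescaled orbit (uniform $L^\infty$ and equicontinuity estimates), and uniqueness of the limit identified by the conserved mass — together with Theorem \ref{th.wp} to guarantee that the strong solution $u$ has the regularity and tail decay needed to justify the change of variables and the passage to the limit. The main obstacle, I expect, is twofold: first, rigorously justifying that the genuinely parabolic problem \eqref{eq1} is asymptotically governed by a \emph{first-order} conservation law (the diffusion term becomes lower-order after rescaling, so one must control it uniformly and show it vanishes in the limit, which is where the hypothesis $u_0(0)=0$ enters to rule out a boundary layer at $\xi=-\infty$, i.e. at $x=0$); and second, handling the singular behavior at $x=0$ — the free boundary $|x|=e^{-kt^{1/m}}$ shrinks to the origin, so the estimates near $x=0$ must be compatible with the weight $|x|^{-2}$ blowing up there. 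Controlling the error uniformly down to this moving free boundary, rather than only on sets bounded away from $x=0$, is the delicate part; on sets avoiding the singularity one expects (and the abstract promises) an even faster rate, but the global statement \eqref{pconv2} in the weight $|x|^{-N}\,dx$ requires the full argument.
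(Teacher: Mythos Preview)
Your overall strategy coincides with the paper's: pass to the logarithmic radial variable and study the resulting one-dimensional equation. Two corrections, then a comparison.

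First, the power prefactor $r^{\alpha}$ is unnecessary: with $\alpha=0$ and $\xi=\log r$ (equivalently, the paper sets $r=e^{\theta s}$ with $\theta=-1/(N-2)$ and rescales time), the equation $r^{-2}u_t=(u^m)_{rr}+\frac{N-1}{r}(u^m)_r$ becomes exactly
\[
w_{\tau}=(w^m)_{ss}-(w^m)_s,
\]
a porous medium equation \emph{with convection}, not a pure PME. The convection term is present from the outset, not only in the asymptotic limit; it is the diffusion term that becomes negligible after the self-similar rescaling, leaving the scalar conservation law $W_\tau+(W^m)_s=0$ whose $N$-wave solution is $W$ in \eqref{lim.conv}, i.e.\ $F$ in the original variables.

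Second, the weighted mass condition \eqref{masscond} translates precisely into $w_0\in L^1(\real)$, and the weighted $L^p$ norm $\|\cdot\|_{p,N}$ into the ordinary $L^p(\real)$ norm in $s$, exactly as you anticipated.

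The substantive difference from the paper is that once the equation $w_\tau=(w^m)_{ss}-(w^m)_s$ has been reached with $w_0\in L^1(\real)$, the paper simply invokes the Lauren\c{c}ot--Simondon result \cite[Theorem~1.4]{LS98}, which gives $\lim_{\tau\to\infty}\tau^{(p-1)/mp}\|w(\tau)-W(\tau)\|_p=0$ directly. Undoing the change of variables is then one line. All the machinery you propose to build---compactness of the rescaled orbit, passage to the limit in the weak formulation, uniqueness of the entropy solution with prescribed mass---is precisely what \cite{LS98} already carries out for this convection--diffusion equation; the ``obstacles'' you flag (vanishing of the diffusion term after rescaling, control near the free boundary) are handled there. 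So your route is correct but reinvents an existing theorem; the paper's proof of Theorem~\ref{th.1} is effectively a reduction-plus-citation.
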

We notice that the previous result fails to hold in the $L^{\infty}$
norm, since the limit is discontinuous (is a shockwave). In order to
improve this result and give a more precise description, we use the
convergence in the sense of graphs for multivalued functions. This
is a situation arising in cases of convergence towards discontinuous
solutions of conservation laws, as explained for example in
\cite{EVZ}. Before stating the result, we introduce the necessary
elements, adapted to our case, in the following:
\begin{definition}\label{def.conv}
Let $f,g:D\subseteq\real\mapsto2^{\real}$ be two multivalued
functions. We define the distance between their graphs in the
natural way:
$$
d_{g}(f(x),g(x))=\inf\{|y-z|:y\in f(x), \ z\in g(x)\}, \quad {\rm
for \ any} \ x\in D.
$$
Let $\{f_k\}:D\subseteq\real\mapsto2^{\real}$ be a sequence of
multivalued functions and $F:D\subseteq\real\mapsto2^{\real}$. We
say that $\{f_k\}$ \emph{converges to $F$ in the sense of graphs} if
for any $\e>0$, there exists $k_{\e}$ sufficiently large such that
$$
d_{g}(f_{k}(x),F(x))\leq\e, \quad {\rm for \ any \ } k\geq k_{\e}, \
x\in D.
$$
\end{definition}
We notice that this notion of convergence generalizes the standard
uniform convergence, to which it reduces if all the functions
involved are univalued. In the case of a function $F$ having a jump
discontinuity at some point $x_0\in D$, assuming that
$$
l_{-}=\lim\limits_{x\to x_0^{-}}F(x)<\lim\limits_{x\to
x_0^{+}}F(x)=l_{+},
$$
we will think at it as the multivalued map with
$F(x_0)=[l_{-},l_{+}]$ (and similarly if $l_{-}>l_{+}$) and $F(x)$
as usual at $x\neq x_0$.

In order to apply it in our case, as we only deal with radially
symmetric solutions, we introduce the new variables and functions
\begin{equation}\label{new.coord}
\overline{u}(y,t)=t^{1/m}u(|x|,t), \quad
\overline{F}(y,t)=t^{1/m}F(|x|,t), \quad y=-\log|x|t^{-1/m}.
\end{equation}
Notice that in the new variables, the profile
$\overline{F}(y,t)=[y/m(N-2)]^{1/(m-1)}$ on its support, thus it is
stationary, and presents a unique jump discontinuity at $y=k$. With
the considerations above, we have the following result
\begin{theorem}\label{th.1b}
Let $u_0$, $u$, $F$ as in Theorem \ref{th.1}. Then, in the notation
of \eqref{new.coord}, we have
\begin{equation}\label{conv.graph}
\overline{u}(y,t)\longrightarrow\overline{F}(y), \quad {\rm as} \
t\to\infty,
\end{equation}
the convergence being in the sense of graphs.
\end{theorem}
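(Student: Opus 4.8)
The plan is to upgrade the integral convergence of Theorem~\ref{th.1} to convergence in the sense of graphs by combining a compactness argument in the new variables \eqref{new.coord} with the uniqueness of the limiting profile $\overline{F}$ and a monotonicity/ordering argument that controls the position of the ``front'' $y = k$. First I would rewrite Eq.~\eqref{eq1} in the self-similar variables $y = -\log|x|t^{-1/m}$, $\overline{u}(y,t) = t^{1/m}u(|x|,t)$; a direct computation (using the radial symmetry and the form of the density $|x|^{-2}$) should transform \eqref{eq1} into a one-dimensional equation in $(y,\tau)$ with $\tau = \log t$, of the form $\partial_\tau \overline{u} = \partial_y(\text{flux}) + (\text{lower order drift})$, for which the stationary solution on its support is exactly $\overline{F}(y) = [y/m(N-2)]^{1/(m-1)}$. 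The point of this change of variables is that the expected limit is now a genuine stationary (time-independent) object, so the large-time behavior becomes a stabilization statement for a one-dimensional nonlinear parabolic/degenerate problem that simplifies, as announced in the abstract, to a scalar conservation law near the front.

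Next I would establish the compactness needed to pass to graph limits. From Theorem~\ref{th.wp} one has uniform $L^1_2$ bounds and the standard smoothing effect for the porous medium-type equation, which in the rescaled variables give uniform $L^\infty_{loc}$ bounds on $\overline{u}(\cdot,t)$ for $t$ large, away from $y = -\infty$ (i.e.\ away from spatial infinity) and also a uniform bound near the front. Together with the $L^p$-type estimate \eqref{pconv2} — which already forces $\overline{u}(\cdot,t) \to \overline{F}$ in every $L^p_{loc}$ of the $y$ variable — and an equicontinuity estimate on the ``continuous part'' $\{y > k\}$ coming from interior regularity for the degenerate equation (where the solution is bounded away from zero the equation is uniformly parabolic), I would extract, along any sequence $t_n \to \infty$, a limit that agrees with $\overline{F}$ pointwise on $\{y \neq k\}$. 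The only freedom left is the behavior at the jump point: this is precisely where the multivalued/graph framework of Definition~\ref{def.conv} is used, and one must show that the graphs of $\overline{u}(\cdot,t_n)$ cannot develop spurious vertical pieces or drift the jump location away from $y = k$.

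To pin down the front I would use a comparison argument: construct explicit sub- and super-solutions of the rescaled equation which are perturbations of $\overline{F}$ with the jump shifted to $y = k \pm \e$, using the conservation-law structure (the front moves with the Rankine–Hugoniot speed, which in the stationary rescaled picture is zero for the correct value $k = k(M_{u_0})$ determined by conservation of the mass $M_{u_0}$ in Theorem~\ref{th.1}). Monotonicity of the flow with respect to initial data, plus the mass identity, should trap the actual front between $y = k-\e$ and $y = k+\e$ for all large $t$. Combining the interior uniform convergence on $\{y \ge k + \e\} \cup \{y \le k - \e\}$ with the trapping of the front and the obvious bound $0 \le \overline{u} \le \overline{F}(k) + \e$ near the front then yields $d_g(\overline{u}(y,t), \overline{F}(y)) \le C\e$ uniformly in $y$ for $t$ large, which is exactly \eqref{conv.graph}.

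The main obstacle I expect is the control of the front position, i.e.\ showing the discontinuity does not lag or run ahead: this requires genuinely using the asymptotic simplification to the conservation law (one cannot see it from the $L^p$ estimate alone, since that allows the jump to wander within a shrinking-in-$L^p$ but not-in-$L^\infty$ neighborhood), and the construction of sharp barriers respecting both the degenerate diffusion near the vanishing set $\{y = 0\}$ and the Rankine–Hugoniot condition at $y = k$ is the delicate technical heart of the argument. A secondary difficulty is justifying the interior equicontinuity uniformly in $t$ up to (but not including) the free boundary where $\overline{u}$ touches zero, which needs the regularity theory for the singular/degenerate equation recalled in Section~\ref{sec.wp}.
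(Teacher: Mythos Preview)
Your strategy is plausible in outline but takes a substantially harder route than the paper, and the step you yourself flag as ``the delicate technical heart'' --- the barrier construction trapping the front at $y=k\pm\e$ --- is not actually carried out and is not obviously feasible. Discontinuous perturbations of $\overline{F}$ solve the limiting conservation law, not the full rescaled equation with its residual diffusion term; turning them into genuine sub/super-solutions of the degenerate parabolic problem, sharp enough to pin the front to within $\e$, would require a separate and nontrivial construction that you do not indicate how to perform.

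The paper bypasses all of this with a single a priori estimate. Working with the transformed solution $w$ of \eqref{PMEconv}, it proves (Lemma~\ref{lem.est}, via a Bernstein argument on the equation satisfied by $p=(m/(m-1))(w^{m-1})_s$) the one-sided Oleinik-type bound
\[
(w^{m-1})_s \le \frac{1}{m\tau},
\]
which in the self-similar variables becomes $(\overline{w}^{m-1})_y \le 1/m$ uniformly in $\tau$. It then invokes a lemma of Escobedo--V\'azquez--Zuazua (stated here as Lemma~\ref{lem.graph}): for nonnegative $g$ with $(g^{m-1})_y\le 1$, an $L^1$-distance of order $\e$ from the jump profile $G$ forces the graph distance to be bounded by a power of $\e$. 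Since the $L^1$ convergence is already provided by Theorem~\ref{th.1}, this immediately yields graph convergence. No compactness, no barriers, no front-tracking are needed: the entropy estimate alone prevents the solution from developing a steep rise anywhere except at the correct location dictated by mass, and this is exactly what controls the shock position. Your compactness/comparison program, even if it could be completed, would be reproving this by hand.
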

\noindent Observe that this convergence implies uniform convergence
in the form
$$
\lim\limits_{t\to\infty}t^{1/m}\left|u\left(e^{t^{1/m}\log|x|},t\right)-F\left(e^{t^{1/m}\log|x|},t\right)\right|=0
$$
away from the shock line of $\overline{F}$, and a control of the
maxima of the family $\overline{u}(t)$ at the shock line $y=k$. We
discover thus a striking phenomenon: convergence to a discontinuous
profile presenting a shock line, which usually comes from a
conservation law. But in Eq. \eqref{eq1}, it is not obvious which
internal process of it may give rise to such discontinuous profile.

We represent in Figure \ref{figure1} both the asymptotic profile $F$
and the evolution of a solution, showing the formation of the
shockwave. The numerical experiment has been done for
$u_0(x)=\max\{(x-0.5)(1.5-x),0\}$.

\medskip

\begin{figure}[ht!]
  \begin{center}
  \includegraphics[width=15cm,height=10cm]{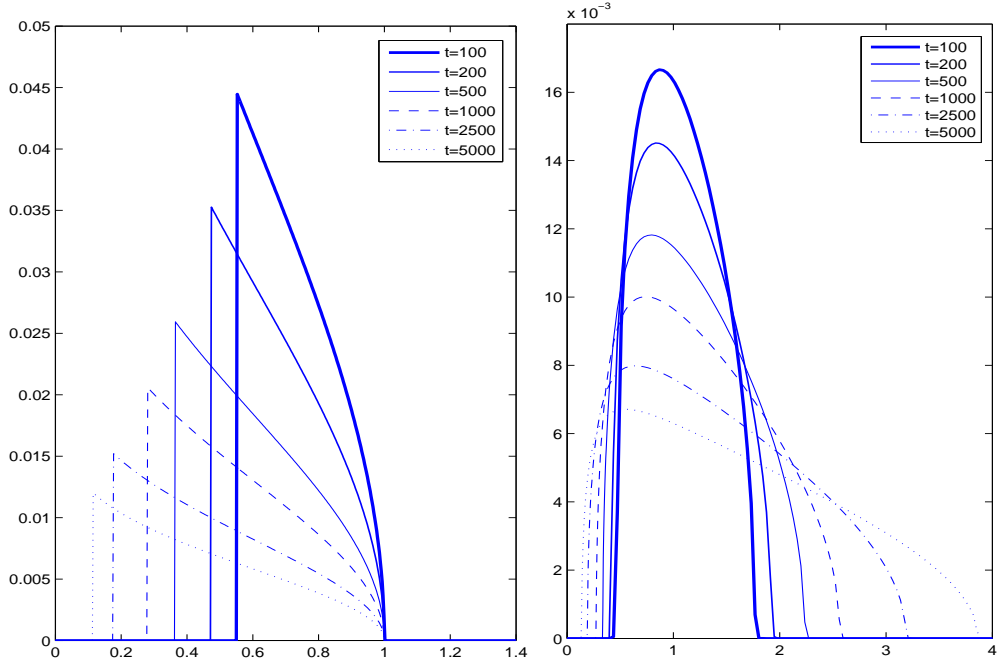}
  \end{center}
  \caption{Profile $F$ (left) and evolution of a general solution (right) for $m=3$ in dimension $N=3$.} \label{figure1}
\end{figure}

\medskip

We remark that in the second figure, some tail appears in the
evolution. The explanation for it is the following: in the
\emph{global time-scale}, this tail for $|x|>1$ is negligible in the
large-time limit (it will tend uniformly to 0). But refining the
scale, one can see some different \emph{localized behavior close to
infinity}, as shown in Theorem \ref{th.3} below.

For general (not necessarily radially symmetric) solutions, we can
get the optimal time decay rate applying the comparison principle.
\begin{corollary}\label{cor.decay}
Let $u$ be a \emph{general solution} with initial condition $u_0\in
L^{\infty}(\real^N)$ and as in Theorem \ref{th.1}, such that there
exists $r>0$ with
\begin{equation}\label{inf0}
\min\limits_{|x|=r}u_0(x)>0.
\end{equation}
Then, the optimal time decay rate of $\|u(t)\|_{\infty}$ is
$t^{-1/m}$.
\end{corollary}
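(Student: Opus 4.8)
The plan is to sandwich the general solution $u$ between two radially symmetric solutions to which Theorem \ref{th.1} applies, and then read off the decay rate $t^{-1/m}$ from the explicit profile $F$. First I would produce the \emph{lower} bound. By the hypothesis \eqref{inf0}, there is $r>0$ and $\delta>0$ with $u_0(x)\geq\delta$ on the sphere $|x|=r$; by continuity of $u_0$ (the initial data is continuous by the running assumptions, and $u_0(0)=0$ rules out nothing here since we stay away from the origin) we may choose a small radially symmetric "bump" $\underline{u}_0$ supported in a thin annulus $\{r_1\le|x|\le r_2\}$ around $|x|=r$, with $\underline{u}_0\le u_0$ and $\underline{u}_0(0)=0$. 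The corresponding radial strong solution $\underline{u}$ satisfies the hypotheses \eqref{masscond} of Theorem \ref{th.1}, hence $t^{1/m}\underline{u}(t)\to\overline{F}$ in the sense made precise there; in particular, evaluating along a fixed ray $|x|=\rho$ with $0<\rho<1$, we get $\liminf_{t\to\infty}t^{1/m}\underline{u}(\rho,t)=\overline{F}(-\log\rho)>0$, since $-\log\rho>0$ lies strictly inside the support of $\overline{F}$. The comparison principle (available for \eqref{eq1}, see Section \ref{sec.wp}) gives $u\ge\underline{u}$, so $\liminf_{t\to\infty}t^{1/m}\|u(t)\|_\infty>0$, which shows $t^{-1/m}$ cannot be beaten.

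For the \emph{upper} bound I would dominate $u_0$ from above. Since $u_0\in L^\infty(\real^N)\cap L^1_2(\real^N)$ and $u_0(0)=0$, I would construct a radially symmetric majorant $\overline{u}_0\ge u_0$ that is still bounded, still lies in $L^1_2$, still vanishes at the origin, and moreover satisfies the finite-mass condition $M_{\overline{u}_0}=\int_{\real^N}|x|^{-N}\overline{u}_0<\infty$ required in \eqref{masscond}. Concretely one can take $\overline{u}_0(x)=\min\{\|u_0\|_\infty,\ C|x|^{\alpha}\}$ near $0$ for a suitable small $\alpha>0$ so that $|x|^{-N}\overline{u}_0$ is integrable near the origin, and cut it off to have compact support or sufficient decay at infinity so that $|x|^{-N}\overline{u}_0\in L^1$ there as well (the $L^1_2$ membership of $u_0$ plus boundedness makes such a majorant easy to arrange). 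By Theorem \ref{th.1}, $t^{1/m}\overline{u}(t)-\overline{F}(\cdot)\to 0$ in $\|\cdot\|_{p,N}$ for each $p$, and together with the $L^\infty$ bound this yields $\limsup_{t\to\infty}t^{1/m}\|\overline{u}(t)\|_\infty\le\sup\overline{F}<\infty$; in fact one only needs the crude statement that $\|\overline{u}(t)\|_\infty = O(t^{-1/m})$, which also follows directly from the smoothing/$L^\infty$ estimates for strong solutions in Section \ref{sec.wp} applied to the radial majorant. Comparison gives $u\le\overline{u}$, hence $\|u(t)\|_\infty\le\|\overline{u}(t)\|_\infty=O(t^{-1/m})$.

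Combining the two bounds, $\|u(t)\|_\infty\asymp t^{-1/m}$, which is the claim. The main obstacle I anticipate is not the comparison step itself but arranging the two comparison barriers so that they simultaneously (a) are genuinely radially symmetric strong solutions to \eqref{eq1}, (b) satisfy the extra mass/vanishing conditions \eqref{masscond} needed to invoke Theorem \ref{th.1}, and (c) sandwich $u_0$. The subtlety is that the general $u_0$ is only assumed bounded and in $L^1_2$, so one must verify that a majorant with $M_{\overline{u}_0}<\infty$ exists — this uses the integrability of $|x|^{-2}u_0$ together with boundedness to control $|x|^{-N}\overline{u}_0$ near the origin — and that the minorant, though tiny, still has $u_0(0)=0$ and hence does not accidentally fall under the other regime of Theorem \ref{th.wp}. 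A secondary technical point is the passage from convergence in $\|\cdot\|_{p,N}$ (Theorem \ref{th.1}) to an $L^\infty$ decay rate for the \emph{radial barriers}; this is where one leans on the a priori $L^\infty$ bounds for strong solutions rather than on Theorem \ref{th.1} alone, since the latter explicitly fails in $L^\infty$ for the shockwave profile.
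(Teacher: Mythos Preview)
Your overall strategy---sandwich $u$ between two radially symmetric solutions and read off the rate from the radial theory---is exactly the paper's. The paper simply takes the envelopes $u_0^\pm(r)=\sup/\inf_{|x|=r}u_0(x)$, which automatically satisfy $u_0^-\le u_0\le u_0^+$ and $u_0^\pm(0)=0$, with $u_0^-$ nontrivial by \eqref{inf0}. Your ad hoc upper barrier $\min\{\|u_0\|_\infty,\,C|x|^\alpha\}$ need not dominate $u_0$ near the origin: continuity, $u_0(0)=0$, and $M_{u_0}<\infty$ together do \emph{not} force $u_0(x)\le C|x|^\alpha$ for any $\alpha>0$ (the weighted mass controls spherical averages, not the angular supremum), so that step would fail for, say, data vanishing logarithmically at $0$.

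There are two further gaps in how you extract the $L^\infty$ rate, and you have in fact put your finger on the right difficulty but proposed the wrong fix. For the upper bound you appeal to ``smoothing/$L^\infty$ estimates for strong solutions in Section~\ref{sec.wp}'', but that section contains no decay estimates whatsoever; the correct tool is Theorem~\ref{th.1b}, whose graph convergence of $t^{1/m}u^\pm$ to the bounded profile $\overline{F}$ yields $t^{1/m}\|u^\pm(t)\|_\infty\to\sup\overline{F}\in(0,\infty)$ directly---this is precisely what the paper invokes. For the lower bound, your evaluation at a fixed $|x|=\rho$ is incorrect: in the variable $y=-(\log|x|)\,t^{-1/m}$ of \eqref{new.coord}, a fixed $\rho\in(0,1)$ gives $y\to0$ as $t\to\infty$, and $\overline{F}(0)=0$, so $t^{1/m}\underline{u}(\rho,t)\to0$, not $\overline{F}(-\log\rho)$. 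To obtain a positive lower bound you must either track the moving maximum via Theorem~\ref{th.1b} or evaluate along the moving ray $|x|=e^{-y_0 t^{1/m}}$ for some fixed $y_0\in(0,k)$.
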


\medskip

\noindent \textbf{2. Initial data $u_0$ such that $u_0(0)=K>0$.} In
this case, we can give a general result, showing that the value $K$
at the origin is preserved along the evolution, and that we have a
uniform convergence in the standard sense towards a continuous
profile. We state first the result for radially symmetric functions,
as follows
\begin{theorem}\label{th.2}
Let $u$ be a \emph{radially symmetric} solution to Eq. \eqref{eq1}
with initial condition $u_0$ satisfying \eqref{initdata},
$u_0(0)=K>0$ and furthermore
\begin{equation}\label{initdata2}
u_0 \ {\rm continuous \ at \ x=0}, \quad 0\leq u_0(x)\leq K, \ {\rm
for \ any} \ x\in\real^N, \quad \lim\limits_{|x|\to\infty}u_0(x)=0,
\end{equation}
Then, we have
\begin{equation}\label{conv.unif}
\lim\limits_{t\to\infty}|u(x,t)-E_{K}(x,t)|=0,
\end{equation}
uniformly in $\real^N$, where
\begin{equation}\label{profile2}
E_{K}(x,t)=\left\{\begin{array}{ll}K, & {\rm if} \ 0\leq|x|\leq
e^{-mK^{m-1}(N-2)t},
\\ \left[-\frac{1}{m(N-2)}\frac{\log|x|}{t}\right]^{1/(m-1)}, & {\rm
if} \ e^{-mK^{m-1}(N-2)t}<|x|<1, \\0, & {\rm if} \
|x|\geq1,\end{array}\right.
\end{equation}
In particular, we also have that $u(0,t)=K$, for any $t>0$.
\end{theorem}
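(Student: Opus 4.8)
The plan is to remove the singular density by the logarithmic change of the radial variable, reducing \eqref{eq1} (for radial data) to a one--dimensional porous medium equation with convection, and then to invoke the large--time analysis of that auxiliary equation. Concretely, put $s=\log|x|$ and keep writing $u=u(s,t)$ for the radial profile; a direct computation with the radial form of the Laplacian turns \eqref{eq1} into
\[
u_t=(u^m)_{ss}+(N-2)(u^m)_s,\qquad (s,t)\in\real\times(0,\infty),
\]
the factor $|x|^{-2}$ cancelling on both sides. In these variables the datum $\tilde u_0(s)=u_0(e^s)$ satisfies, by \eqref{initdata2}, $0\le\tilde u_0\le K$ with $\tilde u_0(s)\to K$ as $s\to-\infty$ and $\tilde u_0(s)\to0$ as $s\to+\infty$, while \eqref{initdata} controls the decay at $s=+\infty$. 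Crucially, $E_K$ read in the variable $s$ is exactly the self--similar rarefaction wave (a function of $\xi=s/t$ in the fan) connecting the state $K$ at $s=-\infty$ to the state $0$ at $s=+\infty$ for the scalar conservation law $u_t+\partial_s g(u)=0$ with the concave flux $g(u)=-(N-2)u^m$; this is the announced ``asymptotic simplification'', obtained by dropping the diffusion term $(u^m)_{ss}$.

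The heart of the matter is then the large--time behaviour of the convection equation above: I claim its solutions with data in the above class converge, uniformly on $\real$, to the rarefaction wave $\overline{E}_K$ (read in $s$), and I would prove this auxiliary result --- the one the abstract advertises as interesting in itself --- in three moves. First, construct explicit super-- and subsolutions that are flat at the two ends: the constant $K$ from above (yielding $0\le u\le K$), and lower barriers localized near $s=-\infty$ built from the continuity of $u_0$ at $x=0$, so that $u(s,t)\to K$ as $s\to-\infty$ and $u(s,t)\to0$ as $s\to+\infty$ for every $t>0$. Second, pass to the self--similar variables $\xi=s/t$, $v(\xi,\tau)=u(s,t)$, $\tau=\log t$, under which the equation becomes $v_\tau-\xi v_\xi=e^{-\tau}(v^m)_{\xi\xi}+(N-2)(v^m)_\xi$, so that the diffusion carries a vanishing coefficient $e^{-\tau}$; the uniform bound $0\le v\le K$ together with parabolic regularity gives compactness of the orbit $\{v(\cdot,\tau)\}_{\tau}$. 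Third, identify every $\omega$--limit as a stationary solution of $-\xi v_\xi=(N-2)(v^m)_\xi$ still joining $K$ at $-\infty$ to $0$ at $+\infty$; since the unique such function is the entropy rarefaction $\overline{E}_K$ (no decreasing shock from $K$ to $0$ is admissible for a concave flux, and the vanishing--viscosity mechanism together with the barriers rules out any other connection), the whole orbit converges, and undoing the scaling yields uniform convergence of $u(\cdot,t)$ to $\overline{E}_K(\cdot,t)$ on $\real$.

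Finally I transfer this back to \eqref{eq1}. Since $x\mapsto\log|x|$ maps $\real^N\setminus\{0\}$ homeomorphically onto $\real$ at the level of radial functions, the uniform convergence in $s$ is precisely \eqref{conv.unif} on $\real^N\setminus\{0\}$; it extends up to $x=0$ once we know $u(0,t)=K$. The bound $u\le K$ follows from comparison with the stationary solution $K$ and $u_0\le K$; the matching lower bound at the origin follows from comparison with a subsolution whose datum is $\le u_0$ and equals $K$ near $x=0$ (equivalently, from the continuity at $x=0$ of the strong solution provided by Theorem \ref{th.wp}), which pins $u(0,t)=K$ for all $t>0$ and makes $E_K(\cdot,t)$ the uniform limit including the origin.

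The main obstacle is the middle step: rigorously proving that the convection equation simplifies asymptotically to the conservation law, i.e. that $(u^m)_{ss}$ is genuinely negligible in the self--similar scale and that the limit is the \emph{entropy} rarefaction and nothing else. Two features force extra care: the data do not decay at $s=-\infty$ (they tend to $K$), so classical convergence results for $L^1$ data, or for integrable perturbations of constants, must be adapted through tailored barriers; and the convergence must be uniform rather than merely local, since it is uniformity that permits passing through the singular point $x=0$ and underlies the identity $u(0,t)=K$.
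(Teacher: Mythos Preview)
Your overall strategy coincides with the paper's: transform to the one--dimensional convection--diffusion equation, rescale so that the diffusion carries a vanishing coefficient, and identify the limit as the rarefaction wave of the conservation law $w_\tau+(w^m)_s=0$ with Heaviside data. The paper does exactly this (in reflected variables $s=-(N-2)\log r$), so at the level of ideas you are on target.

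There is, however, a genuine technical gap in your middle step, and the paper closes it with a device you do not mention: a \emph{reduction to monotone initial data}. Before running the compactness argument, the paper sandwiches $w_0$ between two nondecreasing functions $w_0^l\le w_0\le w_0^u$ sharing the same limits $0$ and $K$ at $\pm\infty$, and observes (Lemma \ref{lem.decr}) that monotonicity in $s$ is preserved by the flow. This buys two things you need but do not supply. First, compactness: for the degenerate equation \eqref{PMEconv} your appeal to ``parabolic regularity'' is not enough; the paper instead uses the one--sided estimate $|(w^m)_s|\le C\|w_0\|_\infty\,\tau^{-1}$ from \cite{LS98}, which becomes two--sided (hence gives equicontinuity uniform in the rescaling parameter) precisely because $w(\cdot,\tau)$ is monotone. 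Second, uniform convergence up to $x=0$: the paper's Step~6 uses that $u(\cdot,t)$ is nonincreasing to squeeze $u$ between $u(\varepsilon_0,t)$ and $u(0,t)=K$ near the origin, upgrading convergence on sets $\{e^{-Rt}\le|x|\le e^{Rt}\}$ to the whole space. Your barrier sketch does not yield this.

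Two smaller points. For the identification of the limit, your ODE argument for stationary states of $-\xi v_\xi=(N-2)(v^m)_\xi$ is the right heuristic, but the paper makes it rigorous by showing the rescaled family picks up the Heaviside initial trace $KH$ and then invoking Kruzhkov's uniqueness theorem for entropy solutions; you should expect to need that step. Finally, for $u(0,t)=K$ the paper argues by contradiction (restart the evolution at $t_0$ with $u(0,t_0)=K_1\neq K$ and obtain two distinct uniform limits $E_K$ and $E_{K_1}$), which avoids constructing the subsolution you describe.
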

The limit profiles obtained in Theorems \ref{th.1} and \ref{th.2}
are interesting and difficult to guess at first sight. Indeed, they
are apparently not related to the equation \eqref{eq1} itself, but
they are obtained via an \emph{asymptotic simplification} process
applied to some equation of porous medium type with convection, in
which \eqref{eq1} can be mapped via a transformation indicated in
Section \ref{sec.tr}.

In the general case of non-radially symmetric solutions, we need to
impose an estimate of the decay at infinity. More precisely, we
have:
\begin{theorem}\label{th.2b}
Let $u$ be a \emph{general} solution to Eq. \eqref{eq1}, with
initial condition $u_0$ satisfying \eqref{initdata}, $u_0(0)=K>0$,
\eqref{initdata2} and furthermore, that there exist $\delta>0$ small
and $R>0$ large such that
\begin{equation}\label{initcond3}
u_0(x)\leq|x|^{2-N-\delta}, \quad {\rm for \ |x|>R}.
\end{equation}
Then, \eqref{conv.unif} holds true uniformly in $\real^N$, with the
same profile $E_K$ as in Theorem \ref{th.2}.
\end{theorem}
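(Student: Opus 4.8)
The plan is to deduce the non-radial case from the radially symmetric result of Theorem \ref{th.2} by a squeezing argument based on the comparison principle for strong solutions to \eqref{eq1}. Since Eq. \eqref{eq1} satisfies a comparison principle (this is part of the well-posedness package recalled in Section \ref{sec.wp}, via Theorem \ref{th.wp}), it suffices to trap the general solution $u$ between two radially symmetric solutions whose initial data dominate, respectively are dominated by, $u_0$, and both of which converge to the same profile $E_K$. The decay hypothesis \eqref{initcond3} is precisely what is needed to build a radial supersolution initial datum that still lies in $L^1_2(\real^N)$ and still has value $K$ at the origin, so that Theorem \ref{th.2} applies to it.

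First I would construct the radial barriers. For the lower barrier, set $\underline{u}_0(x) = \min_{|y|=|x|} u_0(y)$; this is radially symmetric, satisfies $0 \le \underline{u}_0 \le K$, has $\underline{u}_0(0) = K$ by continuity of $u_0$ at the origin, vanishes at infinity (since $u_0$ does), and lies in $L^1_2$; hence the radial solution $\underline{u}(x,t)$ it generates satisfies the hypotheses of Theorem \ref{th.2}. For the upper barrier, set $\overline{u}_0(x) = \max_{|y|=|x|} u_0(y)$ for $|x| \le R$ and $\overline{u}_0(x) = \min\{K, |x|^{2-N-\delta}\}$ for $|x| > R$ — up to enlarging $R$ so that $R^{2-N-\delta} \le K$ and the datum is a nonincreasing radial function near infinity, and up to a harmless mollification to restore continuity at $|x|=R$. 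Using \eqref{initcond3} we get $u_0 \le \overline{u}_0$ pointwise, while $\overline{u}_0(0) = u_0(0) = K$, $0 \le \overline{u}_0 \le K$, and $\int_{\real^N} |x|^{-N+2-\delta}\,dx < \infty$ near infinity (for $\delta$ small this integrand is integrable at infinity in dimension $N \ge 3$), so $\overline{u}_0 \in L^1_2(\real^N)$. Thus $\overline{u}_0$ also satisfies the hypotheses of Theorem \ref{th.2}, and the associated radial solution $\overline{u}$ converges uniformly to $E_K$.

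By comparison, $\underline{u}(x,t) \le u(x,t) \le \overline{u}(x,t)$ for all $(x,t) \in \real^N \times (0,\infty)$. Since both $\underline{u}(t)$ and $\overline{u}(t)$ converge uniformly to $E_K(t)$ as $t \to \infty$ in the sense of \eqref{conv.unif}, the estimate
\begin{equation*}
-\,\|\overline{u}(t) - E_K(t)\|_\infty \le u(x,t) - E_K(x,t) \le \|\overline{u}(t) - E_K(t)\|_\infty - (\underline{u}(x,t) - E_K(x,t)) + (\overline{u}(x,t) - E_K(x,t))
\end{equation*}
together with the two-sided bound forces $\|u(t) - E_K(t)\|_\infty \le \max\{\|\overline{u}(t) - E_K(t)\|_\infty, \|\underline{u}(t) - E_K(t)\|_\infty\} \to 0$, which is exactly \eqref{conv.unif} for $u$. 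The main obstacle I anticipate is not the squeezing step, which is routine once the comparison principle is available, but the careful verification that the truncated-power upper datum $\overline{u}_0$ can be chosen to (i) genuinely dominate $u_0$ everywhere, (ii) remain bounded by $K$ and continuous at the origin with value exactly $K$, and (iii) stay in $L^1_2$; in particular one must check that the mollification near $|x| = R$ does not destroy the pointwise domination $u_0 \le \overline{u}_0$, which may require choosing the smoothing so that it only increases $\overline{u}_0$ there, and one must confirm that Theorem \ref{th.2} indeed only needs the listed structural hypotheses \eqref{initdata2} and not radial monotonicity of the datum. $\qed$
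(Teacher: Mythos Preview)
Your proposal is correct and follows essentially the same comparison/squeezing strategy as the paper. The only difference is cosmetic: the paper takes the upper barrier to be simply $u_0^{+}(r)=\sup_{|x|=r}u_0(x)$ everywhere, and then observes that \eqref{initcond3} passes to this spherical supremum, giving $u_0^{+}(r)\leq r^{2-N-\delta}$ for $r>R$ and hence $u_0^{+}\in L^1_2(\real^N)$ directly; this avoids your piecewise definition and the mollification issue at $|x|=R$ altogether, and your worry about radial monotonicity is unfounded since Theorem~\ref{th.2} does not require it.
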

\begin{remark}\label{rmk}
Condition \eqref{initcond3} in Theorem \ref{th.2b} can be made
slightly more general in the following form: there exists a function
$\Psi:\real\to[0,\infty)$ and some $R>0$ such that
\begin{equation*}
u_0(x)\leq\Psi(|x|), \quad {\rm for \ any \ } x\in\real^N, \ |x|>R,
\quad {\rm and} \ \int_{R}^{\infty}r^{N-3}\Psi(r)\,dr<\infty.
\end{equation*}
\end{remark}
We represent in Figure \ref{figure2} both the profile $E_K$ and the
evolution of a general solution, showing how its form approaches the
expected one. There is again a problem with the tails for $|x|>1$,
as the outer time-scale is different from the global one. This will
be explained below in Theorem \ref{th.3}.

\medskip

\begin{figure}[ht!]
  \begin{center}
  \includegraphics[width=15cm,height=10cm]{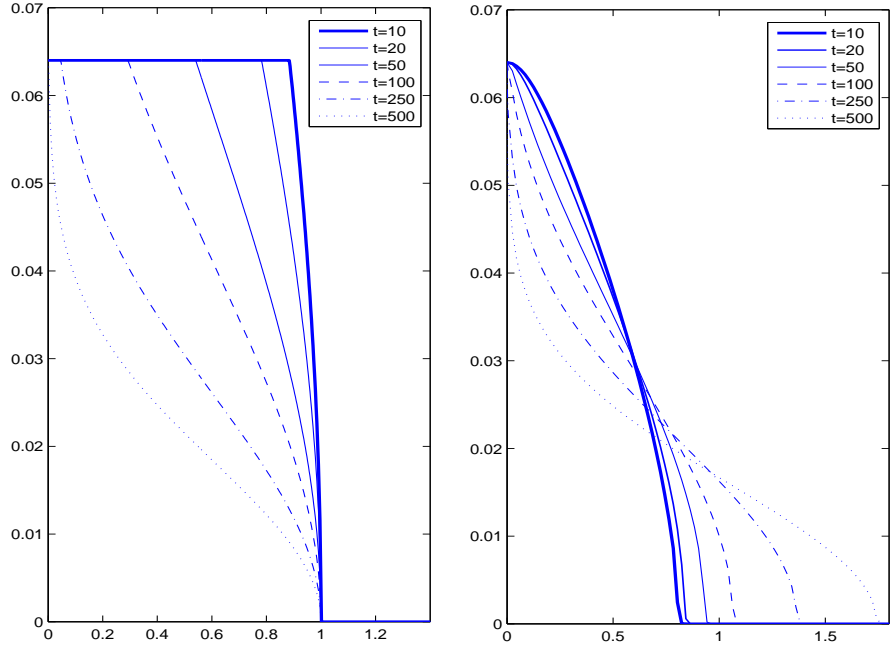}
  \end{center}
  \caption{Profile $E_K$ and evolution of a general solution for $m=3$ in dimension $N=3$.} \label{figure2}
\end{figure}

\medskip

Let us notice some curious facts resulting from our analysis.
\begin{remark}[Lack of continuity as $m\to1$]
We have discovered a striking phenomenon: there is a lack of
"continuity" as $m\to1$ in our equation. Indeed, when considering
the standard porous medium equation \eqref{PME} and the heat
equation, one notices that there appears some kind of "continuity"
as $m\to1$, at some formal level; just as an example, the optimal
decay rates for the porous medium equation and the heat equation in
$\real^N$ are $t^{-N/(mN-N+2)}$, respectively $t^{-N/2}$, and they
are obviously continuous as $m\to1$; the same happens about the
similarity exponents of the fundamental solutions of both equations.

Recalling the analysis done for the case $m=1$, that is
$$
|x|^{-2}u_t=\Delta u,
$$
in our previous paper \cite{IS12}, we notice that the above
considerations are not true for Eq. \eqref{eq1} with $m\geq1$: the
optimal decay rates (when dealing with initial data $u_0(0)=0$) are
$t^{-1/m}$ for $m>1$, respectively $t^{-1/2}$ for $m=1$ (see
\cite{IS12}).

This is very surprizing, and the explanation is the following: Eq.
\eqref{eq1} can be mapped, for all $m\geq1$, by the transformation
\eqref{tr1}, into the general convection-diffusion equation
$$
w_{\tau}=(w^m)_{ss}-(w^m)_s, \quad {\rm in} \ \real\times(0,\infty).
$$
But the properties of it for $m>1$ depart strongly from the ones of
the linear case $m=1$, as shown in \cite{EVZ, LS98}. This explains
our apparently strange "lack of continuity" in the exponents.
\end{remark}

\medskip

\noindent \textbf{3. Finer asymptotics in the "outer" region.} We
can improve the previous convergence results in regions that are far
away from the origin by refining the time-scale (making a "zoom" on
that region), as also the numerical experiments above had shown by
the appearance of thin tails for $|x|>1$ at any $t>0$. We recall
that there exists a family of explicit solutions to Eq. \eqref{eq1}:
\begin{equation}\label{Bar}
B_D(x,t)=t^{-1/(m-1)}\left[D-\frac{1}{m(N-2)}\log(|x|t^{-1/(m-1)(N-2)})\right]_{+}^{1/(m-1)},
\quad {\rm for \ any \ } D>0,
\end{equation}
which does not enter into the framework of our previous study, since
it is singular at $x=0$. But it gives a better information with
respect to large time behavior and decay rates in regions that are
far from the origin; more precisely, we have
\begin{theorem}\label{th.3}
Let $u$ be a solution to Eq. \eqref{eq1} with initial datum $u_0$
satisfying \eqref{initdata}. Then, for any $\delta>0$, we have
$$
\lim\limits_{t\to\infty}t^{1/(m-1)}|u(t)-B_{D}(t)|=0, \quad {\rm
uniformly \ in \ the \ set } \ \left\{|x|\geq\delta
t^{1/(m-1)(N-2)}\right\},
$$
where $D$ is the (unique) constant in \eqref{Bar} such that
$\|B_D\|_{L^1_2}=\|u_0\|_{L^1_2}$.
\end{theorem}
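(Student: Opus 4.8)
\medskip

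\noindent\textbf{Proof plan.} I would use the rescaling--and--compactness scheme (in the spirit of Friedman--Kamin and Kamin--V\'azquez for the porous medium equation), exploiting that the $B_D$'s are \emph{exact} solutions of \eqref{eq1} which become \emph{stationary} profiles in the similarity variables built on the exponent $1/(m-1)$. The constant $D$ is fixed first: multiplying \eqref{eq1} by the time-independent weight $|x|^{-2}$ and integrating gives $\tfrac{d}{dt}\|u(t)\|_{L^1_2}=\int_{\real^N}\Delta u^m\,dx=0$ (the flux vanishes at the origin because $u_t(0,t)=|x|^2\Delta u^m|_{x=0}=0$ and $u$ is regular, by Theorem \ref{th.wp}, and at infinity by $L^1_2$-integrability), so $\|u(t)\|_{L^1_2}\equiv\|u_0\|_{L^1_2}=:M$; a direct scaling computation shows $\|B_D(t)\|_{L^1_2}$ is independent of $t$ and that $D\mapsto\|B_D\|_{L^1_2}$ is a continuous, strictly increasing bijection, so there is exactly one $D$ with $\|B_D\|_{L^1_2}=M$, namely the one in the statement.

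Passing to the similarity variables $\xi=x\,t^{-1/[(m-1)(N-2)]}$, $\theta=\log t$, $v(\xi,\theta)=t^{1/(m-1)}u(x,t)$, equation \eqref{eq1} becomes the \emph{autonomous} equation
$$
|\xi|^{-2}\partial_\theta v=\Delta_\xi v^m+|\xi|^{-2}\Big(\tfrac{1}{(m-1)(N-2)}\,\xi\cdot\nabla_\xi v+\tfrac{1}{m-1}\,v\Big),
$$
the weighted mass $\int_{\real^N}|\xi|^{-2}v(\xi,\theta)\,d\xi\equiv M$ is $\theta$-invariant, and $B_D$ corresponds to the stationary profile $\mathcal B_D(\xi)=\big[D-\tfrac{1}{m(N-2)}\log|\xi|\big]_+^{1/(m-1)}$ (which one checks directly solves the stationary equation on the interior of its support, with matching flux across the free boundary). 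The theorem is then equivalent to $v(\cdot,\theta)\to\mathcal B_D$ uniformly on $\{|\xi|\ge\delta\}$ for every $\delta>0$, as $\theta\to\infty$.

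The analytic core is a set of uniform a priori bounds on the outer region. For an upper bound I would compare $u$ with a member of the same family: for $D'$ large, $B_{D'}(\cdot,t_0)$ dominates $u(\cdot,t_0)$ --- it blows up (only logarithmically) at the origin, and its support $\{|x|<e^{m(N-2)D'}t_0^{1/[(m-1)(N-2)]}\}$ can be chosen to contain the essential support of the bounded function $u(\cdot,t_0)$, the slowly decaying tail of $u_0$ being absorbed by a condition of Remark \ref{rmk} type or by a secondary barrier --- so $u\le B_{D'}$ for all later times; after rescaling, $0\le v\le\mathcal B_{D'}$, hence $v\le C(\delta)$ on $\{|\xi|\ge\delta\}$, $v$ has uniformly compact support in $\xi$, and --- since $\mathcal B_{D'}$ grows only logarithmically at $\xi=0$ --- the weighted mass $\int|\xi|^{-2}v\,d\xi$ is tight uniformly in $\theta$ (no mass escapes to $\xi=0$ or $\xi=\infty$); together with the conserved total mass this also keeps $v$ from degenerating to $0$ in the outer region. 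Where $v$ is bounded above and $\xi$ is away from $0$ the rescaled equation is uniformly parabolic of porous-medium type, so interior regularity theory (DiBenedetto--V\'azquez) gives uniform H\"older estimates for $v$ on $\{|\xi|\ge\delta\}\times[\theta_0,\infty)$.

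By Arzel\`a--Ascoli, along any $\theta_j\to\infty$ a subsequence of $v(\cdot,\theta_j+\cdot)$ converges, locally uniformly on $\{|\xi|>0\}\times\real$ and (by tightness) in $L^1_2$, to a bounded weak solution $V$ of the autonomous equation with $\int|\xi|^{-2}V\,d\xi=M$ and compact support. A Lyapunov/entropy functional for the autonomous flow --- non-increasing along trajectories, constant only on stationary states --- (equivalently, the $L^1_2$-contraction, which makes $\theta\mapsto\|v(\theta)-\mathcal B_D\|_{L^1_2}$ non-increasing) forces $V$ to be stationary, and a nonnegative stationary solution with weighted mass $M$ is unique (in the radial class the profile ODE parametrises the $B_D$-family by its mass; the general case reduces to it via the $L^1_2$-contraction together with Aleksandrov reflection, or by symmetrisation), so $V\equiv\mathcal B_D$. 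Since the limit does not depend on the subsequence, $v(\cdot,\theta)\to\mathcal B_D$ in $L^1_2$ and, by the H\"older bounds, uniformly on each $\{|\xi|\ge\delta\}$; undoing the change of variables gives the claim, also for general non-radial solutions because the comparison step and all estimates are carried out directly in $\real^N$. I expect the main obstacles to be: (a) the upper barrier for initial data with slowly decaying tails, together with the loss of uniform parabolicity near $\xi=0$ and across the free boundary of $\mathcal B_D$; and (b) the rigorous identification of the $\omega$-limit --- that it is stationary and, absent radial symmetry, that such a state with prescribed weighted mass is unique --- the recurring difficulty in Kamin--V\'azquez-type arguments, here aggravated by the singular weight $|\xi|^{-2}$.
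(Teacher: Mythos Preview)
Your proposal is correct and follows essentially the same route the paper indicates: the paper does not give a self-contained proof of Theorem~\ref{th.3} but refers to \cite[Theorem 1.1]{NR}, stating that the argument there carries over verbatim because all estimates are performed on compact subsets of $\real^N\setminus\{0\}$, where the singular coefficient plays no role. The scheme in \cite{NR} is precisely the rescaling--compactness (``four-step'') method you outline --- self-similar variables adapted to the family $B_D$, uniform $L^\infty$ and H\"older bounds away from the origin via comparison with a large $B_{D'}$, Ascoli--Arzel\`a compactness, and identification of the $\omega$-limit through mass conservation and the $L^1_2$-contraction --- so your sketch in fact spells out what the paper leaves implicit.
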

The proof is a straightforward adaptation of the one of
\cite[Theorem 1.1]{NR}, as all the technical steps there are done in
compact sets of $\real^N\setminus\{0\}$ where the singularity at
$x=0$ does not have any effects. We omit the details.

Notice that Theorem \ref{th.3} gives an optimal information in
\emph{outer sets} of the type $\{|x|\geq Ct^{1/(m-1)(N-2)}\}$, where
our analysis was only giving a non-optimal decay rate due to the
domination (in the global profiles) of the \emph{inner regions}
close to the origin. A similar situation was described in
\cite[Section 7]{KRV10}.

\medskip

\noindent \textbf{Organization of the paper.} In Section
\ref{sec.wp} we give the notion of a solution to \eqref{eq1} and we
recall some well-posedness results proved in \cite{NR}. Section
\ref{sec.tr} deals with a transformation mapping the radially
symmetric solutions to \eqref{eq1} to solutions to a porous medium
equation with convection that is also interesting in itself. Based
on this transformation, we prove Theorems \ref{th.1} and \ref{th.1b}
in Section \ref{sec.th1} using previous knowledge on the transformed
equation. The proof of \ref{th.2} is done in Section \ref{sec.th2}
and relies on the four-step technique, which is a general method in
proving large time behavior for nonlinear diffusion equations, see
\cite{V, Re97, NR} for other examples of use; the asymptotic
simplification will arise then in a natural way. Then, passing to
general solutions and proving Theorem \ref{th.2b} and Corollary
\ref{cor.decay} is an application of the comparison principle, see
Section \ref{sec.general}. We finally include a Section
\ref{sec.final} on further extensions to other equations or
densities of interest and some open problems.

\section{Well-posedness. Functional preliminaries}\label{sec.wp}

In this section we make precise our notions of solution that we use
along the present paper. We deal with the Cauchy problem
\begin{equation}\label{CP1}
\left\{\begin{array}{ll}|x|^{-2}u_t=\Delta u^m, \quad \hbox{in} \
\real^N\times(0,\infty),\\ u(x,0)=u_0(x), \quad
x\in\real^N,\end{array}\right.
\end{equation}
with $u_0$ satisfying \eqref{initdata}. The theory of existence and
uniqueness for this Cauchy problem has been recently studied in
\cite{NR}, from where we extract some statements for the sake of
completeness. Let
$$
Q^*:=\real^N\times(0,\infty)\setminus\{(0,t):t>0\}.
$$
We do not insist on the notion of \emph{weak solution} to
\eqref{eq1}, which is a straightforward adaptation of
\cite[Definition 2.1]{NR} to our special density
$\varrho(x)=|x|^{-2}$. We next consider the class $\cf$ of functions
$u\in C([0,\infty);L^1_2)\cap C(Q^*)$ satisfying the following
limitations on their behavior near $x=0$ and close to infinity: for
every $\tau>0$, there exists a constant $C(\tau)$ such that
$$
u(x,t)\leq C(\tau)|\log x|^{1/(m-1)}, \quad {\rm for} \ {\rm any} \
t>\tau, \ 0<|x|<\e<1
$$
for some $\e>0$ sufficiently small, and
$$
u(x,t)\leq C(\tau), \quad {\rm for} \ {\rm any} \ t>\tau, \ |x|>1.
$$
With these conditions, we have the following
\begin{theorem}\label{th.wp}
Let $u_0$ as in \eqref{initdata}. Then the Cauchy problem
\eqref{CP1} admits a unique (weak) solution $u\in\cf$ such that
\begin{equation}\label{strongsol}
|x|^{-2}u_t, \ \Delta u^m\in L^1_{\rm loc}(Q_*), \quad
|x|^{-2}u_t=\Delta u^m \ a.\,e. \ {\rm in} \ Q_*.
\end{equation}
\end{theorem}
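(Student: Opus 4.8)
The plan is to transcribe, with the modifications forced by the genuine singularity at $x=0$, the existence--uniqueness scheme of \cite{NR}, which treats the case $\varrho(x)\sim|x|^{-2}$ at infinity but $\varrho$ regular at the origin. First I would introduce an approximation scheme: replace the density by the bounded, uniformly elliptic weights $\varrho_n(x)=\min\{n,(|x|^2+1/n)^{-1}\}$, the initial datum by smooth functions $u_{0n}$ with $0<\delta_n\le u_{0n}\le\|u_0\|_\infty$ and $u_{0n}\to u_0$ in $L^1_2$, and the domain by the balls $B_n=B(0,n)$; then solve the nondegenerate quasilinear Dirichlet problems $\varrho_n\partial_t u_n=\Delta u_n^m$ in $B_n\times(0,\infty)$ with $u_n=\delta_n$ on the lateral boundary. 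Since $u_n\ge\delta_n$ by comparison, the problem is uniformly parabolic and classical theory provides smooth solutions; comparison with constants (which solve \eqref{eq1}) gives $\|u_n(t)\|_\infty\le\|u_0\|_\infty$, and the order-preserving, $L^1$-contractive character of the operator $u\mapsto\Delta u^m$ on $L^1(\varrho_n\,dx)$ gives stability in $L^1_2$.

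The crux is to establish, uniformly in $n$, the two one-sided bounds defining the class $\cf$. For the bound $u_n(x,t)\le C(\tau)|\log|x||^{1/(m-1)}$ on $0<|x|<\e$, $t>\tau$, I would use as barriers the stationary solutions $\big[C_1-\tfrac{1}{m(N-2)}\log|x|\big]_+^{1/(m-1)}$ of \eqref{eq1} (the very profiles underlying $F$ and $E_K$), and, to make the bound uniform for $t$ bounded away from $0$ only, the time-dependent family $B_D$ from \eqref{Bar}: choosing the free constants large enough that these supersolutions dominate $u_{0n}$ on a fixed small ball and exceed $\delta_n$ on its boundary, the comparison principle on $B(0,\e)\times(\tau,\infty)$ transfers the estimate to $u_n$. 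The bound $u_n(x,t)\le C(\tau)$ for $|x|>1$ is immediate from the global $L^\infty$ bound. With these estimates and the interior regularity of the porous medium equation (which is locally nondegenerate and uniformly parabolic away from $x=0$), I would extract a subsequence $u_n\to u$ locally uniformly in $Q^*$ and in $C([0,\infty);L^1_2)$, pass to the limit in the weak formulation, and verify $u\in\cf$.

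To upgrade $u$ to a strong solution, i.e.\ to \eqref{strongsol}, I would pass to the limit in the local energy identity for $u_n$ (obtained by testing the equation with $\eta^2(u_n^m)_t$, $\eta$ a cutoff supported in $Q^*$) and in an Aronson--B\'enilan-type time-monotonicity estimate valid on compact subsets of $Q^*$; together these yield $|x|^{-2}u_t,\ \Delta u^m\in L^1_{\mathrm{loc}}(Q^*)$ and hence the equation a.e. Finally, for uniqueness --- which I expect to be the delicate point --- I would run the standard weighted $L^1$-contraction argument (duality/Kruzhkov doubling, or the classical test-function computation for the PME) on $(\real^N\setminus B(0,\rho))\times(0,T)$ and let $\rho\to0$. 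The only new contribution relative to \cite{NR} is the flux term over the inner sphere $\partial B(0,\rho)$; using the $\cf$-bound, $|\nabla(u^m)|$ is controlled near the origin by $C|\log|x||^{1/(m-1)}/|x|$, so this term is $O\big(\rho^{N-2}|\log\rho|^{1/(m-1)}\big)\to0$ since $N\ge3$. In other words, the logarithmic growth allowed by $\cf$ is exactly weak enough that $\{x=0\}$ is a removable singularity for the comparison: no mass is created or destroyed there. Verifying this flux bound rigorously, and that it is consistent with the regularity actually available up to the singularity, is the step requiring the most care; the rest is a routine adaptation of \cite{NR}.
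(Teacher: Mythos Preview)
The paper does not actually prove Theorem~\ref{th.wp}: immediately after the statement it writes ``This theorem is proved in \cite[Theorem 3.1]{NR},'' and moves on. So there is no argument in the paper to compare your proposal against; the result is imported wholesale from \cite{NR}.

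Your sketch is a reasonable reconstruction of what the \cite{NR} proof presumably looks like when adapted to the genuinely singular density $\varrho(x)=|x|^{-2}$, and your identification of the logarithmic barriers (the stationary profiles and the family $B_D$ of \eqref{Bar}) as the right comparison functions to enforce membership in $\cf$ is on target. One word of caution on the uniqueness step: the class $\cf$ gives a pointwise bound on $u$ near the origin, not on $\nabla u^m$; the flux estimate $|\nabla u^m|\le C|\log|x||^{1/(m-1)}/|x|$ you invoke is what one gets by formally differentiating the barrier, but turning the $\cf$ bound on $u$ into a gradient bound of this type requires an additional local regularity argument (or a different handling of the inner boundary term in the contraction computation). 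You flag this yourself as the delicate point, and it is; but since the paper simply cites \cite{NR}, there is nothing further to check here.
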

This theorem is proved in \cite[Theorem 3.1]{NR}. Following previous
literature, a solution $u\in\cf$ satisfying \eqref{strongsol} will
be called a \emph{strong solution} in the sequel. Moreover, if $u_0$
is continuous, then the solution $u$ is continuous including at the
origin.

We end these preliminaries with the following contraction principle,
also proved in \cite{NR}, that implies both uniqueness and
comparison:
\begin{proposition}[$L^{1}_2$-Contraction principle]\label{prop.contraction}
Let $u_1$, $u_2$ be two strong solutions of Eq. \eqref{eq1}. For
$0<t_1<t_2$ we have
\begin{equation}\label{contr}
\int_{\real^N}|x|^{-2}\left[u_1(x,t_2)-u_2(x,t_2)\right]_{+}\,dx\leq\int_{\real^N}|x|^{-2}\left[u_1(x,t_1)-u_2(x,t_1)\right]_{+}\,dx,
\end{equation}
where $[g]_{+}$ represents the positive part of a generic function
$g$.
\end{proposition}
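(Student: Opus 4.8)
\medskip
\noindent\textbf{Proof proposal.} The plan is to run the classical Kato-type $L^1$-contraction argument, adapted to the weighted setting (as for the porous medium equation with density in \cite{V,RV06,NR}). Set $w:=u_1-u_2$ and $z:=u_1^m-u_2^m$. Subtracting the two copies of \eqref{strongsol} gives $|x|^{-2}\partial_t w=\Delta z$ a.e. in $Q_*$. Since $s\mapsto s^m$ is increasing on $[0,\infty)$, one has $\mathrm{sign}_+(z)=\mathrm{sign}_+(w)$ pointwise, $\mathrm{sign}_+$ being the Heaviside function. I would fix a smooth nondecreasing approximation $p_\e:\real\to[0,1]$ with $p_\e\equiv 0$ on $(-\infty,0]$, $p_\e'\ge 0$, and $p_\e\uparrow\mathrm{sign}_+$ as $\e\to0$; multiply the subtracted equation by $p_\e(z)$, integrate over $\real^N$ against $dx$ (the weight $|x|^{-2}$ being already carried by the left-hand side), integrate by parts once on the right, and then let $\e\to0$. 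The target is the differential inequality $\frac{d}{dt}\int_{\real^N}|x|^{-2}[w(x,t)]_+\,dx\le0$ on $(0,\infty)$, whose integration from $t_1$ to $t_2$ is precisely \eqref{contr}.

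Carrying the two sides out formally: on the right one gets
$$ \int_{\real^N}\Delta z\;p_\e(z)\,dx=-\int_{\real^N}p_\e'(z)\,|\nabla z|^2\,dx\le 0, $$
a sign that survives the limit $\e\to0$; on the left, using $p_\e(z)\,\partial_t w\to\mathrm{sign}_+(w)\,\partial_t w=\partial_t[w]_+$ (the concrete content of Kato's inequality here), one recovers $\frac{d}{dt}\int_{\real^N}|x|^{-2}[w]_+\,dx$, a quantity that is finite and continuous in $t$ since $u_1,u_2\in C([0,\infty);L^1_2)$, so the integration in time is legitimate.

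The hard part will be making these formal steps rigorous, since strong solutions only satisfy $|x|^{-2}\partial_t u,\ \Delta u^m\in L^1_{\mathrm{loc}}(Q_*)$, the weight is singular at the origin, and solutions may blow up logarithmically there (the class-$\cf$ bound near $x=0$). My plan here is a double spatial cutoff together with Steklov time-averaging: take $\chi_{\d,R}\in C_c^\infty(\real^N)$ equal to $1$ on $\{\d<|x|<R\}$ and supported in $\{\d/2<|x|<2R\}$, test the weak form of the subtracted equation against $\chi_{\d,R}\,p_\e(z^h)$ (where $z^h$ is built from Steklov averages of $u_1,u_2$ so that the $t$-derivative is classical), and pass to the limits $h\to0$, $R\to\infty$, $\d\to0$, $\e\to0$ in that order. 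The outer cutoff error, supported in $\{R<|x|<2R\}$, vanishes as $R\to\infty$ because the class-$\cf$ bound $u_i\le C(\tau)$ together with $u_i\in L^1_2$ give enough integrability of $\nabla z$ near infinity; the inner cutoff error, supported in $\{\d/2<|x|<\d\}$, vanishes as $\d\to0$ because $|x|^{-2}[w]_+\in L^1(\real^N)$ and the logarithmic class-$\cf$ bound keeps $z$ (and $\nabla z$ in $L^2_{\mathrm{loc}}$) under control on thin annuli around the origin. Once the localization is in place, the passages $h\to0$ and $\e\to0$ are routine monotone/dominated-convergence arguments. A cleaner alternative would be to establish \eqref{contr} first for the regularized problems used in the existence proof of \cite{NR} — bounded smooth densities $\varrho_n\to|x|^{-2}$ and smooth, rapidly decaying data, where every manipulation above is classical — and then pass to the limit in $n$ using the construction of strong solutions as $L^1_2$-limits of these approximations in \cite{NR}; this confines all the delicate singular-weight analysis inside the construction already carried out there.
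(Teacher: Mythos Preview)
The paper does not actually prove this proposition: immediately before stating it, the authors write ``also proved in \cite{NR}'' and give no argument of their own. So there is nothing in the paper to compare your proof against beyond that citation.

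That said, your sketch is the standard Kato/B\'enilan $L^1$-contraction argument adapted to the weight $|x|^{-2}$, and it is correct in outline. Of the two routes you propose, the second one---establish \eqref{contr} for the smooth approximating problems with regularized densities and data, where the integration by parts and the sign computation are classical, and then pass to the limit using the $L^1_2$-stability of the construction---is exactly how \cite{NR} proceeds, and it is the cleaner choice for the reasons you state: all the delicate handling of the singularity at $x=0$ and the behavior at infinity is absorbed into the existence theory already developed there. Your first route (double spatial cutoff plus Steklov averaging applied directly to strong solutions) would also work, but the control of the boundary terms on the shrinking inner annulus requires more than just the pointwise class-$\cf$ bound on $u_i$; one really needs an energy-type estimate on $\nabla u_i^m$ near the origin, which again comes most naturally from the approximation scheme. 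So in practice both routes funnel through \cite{NR}, which is why the paper simply cites it.
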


\section{Radially symmetric solutions. The
transformation}\label{sec.tr}

We now restrict ourselves to radially symmetric solutions
$u(x,t)=u(r,t)$, $r=|x|$, to \eqref{eq1}. We introduce the following
change of variables:
\begin{equation}\label{tr1}
u(r,t)=w(s,\tau), \quad r=e^{\theta s}, \ t=\theta^2\tau,
\end{equation}
for some $\theta$ to be chosen later. Denoting by subindex the
derivative with respect to the corresponding variable, we notice
that
$$
(u^m)_r(r,t)=\frac{1}{\theta r}(w^m)_s(s,\tau), \quad
(u^m)_{rr}(r,t)=\frac{1}{(\theta
r)^2}(w^m)_{ss}(s,\tau)-\frac{1}{\theta r^2}(w^m)_s(s,\tau).
$$
Replacing these formulas into the radially symmetric form of Eq.
\eqref{eq1}, that is
$$
r^{-2}\partial_{t}u(r,t)=(u^m)_{rr}(r,t)+\frac{N-1}{r}(u^m)_r(r,t),
$$
we arrive to the following equation satisfied by $w$:
\begin{equation}\label{transfeq}
\partial_{\tau}w(s,\tau)=(w^m)_{ss}(s,\tau)+(N-2)\theta(w^m)_{s}(s,\tau).
\end{equation}
As we are in dimension $N\geq3$, we arrive to a porous medium
equation with convection in one space dimension. We can simply
choose then $\theta=-1/(N-2)<0$ to get
\begin{equation}\label{PMEconv}
\partial_{\tau}w(s,\tau)=(w^m)_{ss}(s,\tau)-(w^m)_{s}(s,\tau).
\end{equation}

The equation \eqref{PMEconv} is very interesting by itself and it
has been obtained from the modelling of the transport of a solute
through a porous medium, the physical models appearing in \cite{GDD}
and references therein. Starting from this point, the mathematical
theory for \eqref{PMEconv} (and also for the one with a plus sign in
the right hand side, which is equivalent to \eqref{PMEconv} changing
$s$ into $-s$) developed rapidly in the framework of the so-called
\emph{mild solutions}, generated via semigroup theory, as for
example in \cite{BT, DK, G1} and references therein. In particular,
well-posedness for the Cauchy problem is proved when $w_0\in
L^1(\real)$. The large-time behavior of solutions to \eqref{PMEconv}
has been studied by Lauren\ced{c}ot and Simondon in \cite{LS97,
LS98}, where again they ask for the initial data $w_0$ to be
integrable.

\medskip

\noindent \textbf{Remark.} The same transformation applies in
dimension $N=2$ leading to the standard porous medium equation
\eqref{PME}. We do not deal with this case here, as explained in the
Introduction.

\section{Radially symmetric solutions. Asymptotic behavior when
$u_0(0)=0$}\label{sec.th1}

In this section we prove Theorems \ref{th.1} and \ref{th.1b}. We
will work with the convection-diffusion equation \eqref{PMEconv}
obtained via the transformation \eqref{tr1}. A specific fact in this
type of equations is the competition between the two processes
(nonlinear diffusion and convection), leading to different ranges
where the large time behavior is very different. In our case, it is
shown in \cite{LS98} that we are in the range where the convection
process is dominant.

Before beginning the rigorous proof, we have to "guess" the correct
profile. This shows how the asymptotic simplification comes into
play, once we make the \emph{ansatz}:
\begin{equation}\label{ansatz1}
w(s,\tau)=\tau^{-1/m}v(y,\tau), \quad y=s\tau^{-1/m}.
\end{equation}
Notice that the ansatz is coherent with our case, of initial
condition vanishing at the origin (that is, after the transformation
\eqref{tr1}, that $\lim\limits_{s\to\infty}w(s,0)=0$), as we expect
to have a time decay. Thus, we calculate
$$
w_{\tau}(s,\tau)=\tau^{-1/m}v_{\tau}(y,\tau)-\frac{1}{m}\tau^{-1-1/m}\left(v(y,\tau)+yv_{y}(y,\tau)\right)
$$
and
$$
(w^m)_s(s,\tau)=\tau^{-1-1/m}(v^m)_{y}(y,\tau), \quad
(w^m)_{ss}(s,\tau)=\tau^{-1-2/m}(v^m)_{yy}(y,\tau),
$$
hence, letting also $\overline{\tau}=\log(1+\tau)$, we deduce that
$v=v(y,\overline{\tau})$ solves
\begin{equation}\label{transfeq2}
v_{\overline{\tau}}=\frac{1}{m}(v+yv_{y})-(v^m)_y+e^{-\overline{\tau}/m}(v^m)_{yy}.
\end{equation}
Letting formally $\overline{\tau}\to\infty$ in \eqref{transfeq2} and
assuming that the time decay in \eqref{ansatz1} is the correct one,
we expect to converge to a stationary solution $v^*=v^*(y)$ solving
$$
\frac{1}{m}\left(yv^*_{y}+v^*\right)-\left[(v^*)^m\right]_y=0.
$$
By integration and taking into account that
$\lim\limits_{y\to\infty}v^*(y)=0$, we find
$v^*(y)=(y/m)_{+}^{1/(m-1)}$, hence, coming back to the original
variables, we expect that the limit profile of $w$ as
$\tau\to\infty$ will be
\begin{equation}\label{lim.conv}
W(s,\tau)=\left\{\begin{array}{ll}\tau^{-1/m}\left(\frac{1}{m}s\tau^{-1/m}\right)^{1/(m-1)},
& {\rm for} \ s\in[0,k\tau^{1/m}),\\0, & {\rm
otherwise},\end{array}\right.
\end{equation}
which coincides with the one in \cite[Theorem 1.4]{LS98} and is a
special solution to the following first order equation obtained via
asymptotic simplification
$$
W_{\tau}+(W^m)_{s}=0.
$$
The constant $k$ for the branching point above is unique and is
chosen for such profile to have initial mass $M_{u_0}$. Finally, in
the initial equation, the limit profile writes
$$
F(x,t)=\left\{\begin{array}{ll}0, & {\rm for} \
|x|<e^{-kt^{1/m}},\\t^{-1/m}\left[-\frac{1}{m(N-2)}\log|x|t^{-1/m}\right]_{+}^{1/(m-1)},
& {\rm for} \ |x|\geq e^{-kt^{1/m}},\end{array}\right.
$$
as expected in Theorem \ref{th.1}.

All the previous calculations were totally formal; they show how the
limit profile appears in a logical manner in our work. We are now in
position to provide the rigorous proofs.

\medskip

\begin{proof}[Proof of Theorem \ref{th.1}]
This is simple in view of the results in \cite{LS98}. Let $u_0$ be
an initial condition as in the statement of Theorem \ref{th.1}, $u$
the (radially symmetric) solution to Eq. \eqref{eq1} with initial
condition $u_0$ and $w$ be the solution to Eq. \eqref{PMEconv}
obtained from $u$ via the tranformation \eqref{tr1}. Then, condition
\eqref{masscond} implies that $\lim\limits_{s\to\infty}w_0(s)=0$ and
that
$$
\int_{-\infty}^{\infty}w_0(s)\,ds=(N-2)\int_0^{\infty}\frac{u_0(r)}{r}=\frac{(N-2)M_{u_0}}{\omega_1}<\infty,
$$
where $\omega_1$ is the area of the unit sphere in $\real^N$. It
follows that $w_0\in L^1(\real)$. We are in the same conditions as
in \cite[Theorem 1.4]{LS98} (for the case $q=m$ in the notations
used there), hence we deduce that, for any $p\in[1,\infty)$, we have
\begin{equation}\label{L1conv}
\lim\limits_{t\to\infty}t^{(p-1)/mp}\|w(t)-W(t)\|_{p}=0,
\end{equation}
which, undoing the change of variables, yields \eqref{pconv2}.
\end{proof}

\medskip

\begin{proof}[Proof of Theorem \ref{th.1b}]
In order to prove the convergence in the sense of graphs, we use the
following result, which is an adaptation of \cite[Lemma 2.2]{EVZ}.
\begin{lemma}\label{lem.graph}
Let $g\in L^1(\real)$ be a nonnegative function such that
\begin{equation}\label{interm1}
(g^{m-1})_x\leq1, \quad {\rm for \ any \ } x>0.
\end{equation}
Consider the function $G$ defined as
\begin{equation*}
G(x)=\left\{\begin{array}{ll}x^{1/(m-1)}, & {\rm for} \ 0\leq x\leq T\\
0, & {\rm otherwise},\end{array}\right.
\end{equation*}
where $T$ is chosen such that $\|G\|_{1}=\|g\|_1$. Assume that
$\|g-G\|_{1}<\e$ for some $\e>0$. Then the distance between the
graphs of $g$ and $G$ can be estimated by a positive power of $\e$.
\end{lemma}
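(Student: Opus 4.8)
The plan is to quantify the intuitive fact that a nonnegative, integrable function whose $(m-1)$-th power has derivative bounded above by $1$ cannot deviate much in graph distance from the "triangular" profile $G$ once it is $L^1$-close to $G$. The one-sided slope condition \eqref{interm1} says that $g^{m-1}(x)-x$ is nonincreasing on $(0,\infty)$; equivalently, for $x_1<x_2$ we have $g^{m-1}(x_2)-g^{m-1}(x_1)\le x_2-x_1$, so $g^{m-1}$ (hence $g$, since $t\mapsto t^{1/(m-1)}$ is locally Lipschitz away from $0$ and monotone) has controlled upward jumps but may drop arbitrarily fast. This asymmetry is exactly what forces the graph of $g$ to stay near $G$: $g$ can only exceed the line $G(x)=x^{1/(m-1)}$ by an amount controlled by how far to the left its support reaches, and it can only fall below $G$ in a way that, by the $L^1$ constraint, cannot persist over a long interval.

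First I would fix notation: let $T=T(g)$ be as in the statement, so $\|G\|_1=\int_0^T x^{1/(m-1)}\,dx=\|g\|_1$, and set $h=g^{m-1}$, $H=G^{m-1}$, so that $H(x)=x$ on $[0,T]$ and $h(x)-x$ is nonincreasing on $(0,\infty)$. I would first control the right end of the support: if $g(x_0)>0$ for some $x_0$, then $h(x)\ge h(x_0)-(x-x_0)=:\,$positive as long as $x<x_0+h(x_0)$, so on that whole interval $g\ge (h(x_0)-(x-x_0))_+^{1/(m-1)}$; integrating gives a lower bound on $\|g\|_1$ of order $h(x_0)^{m/(m-1)}$, i.e. $g(x_0)^m$ up to constants. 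Hence $g(x_0)\le C\|g\|_1^{1/m}$ for all $x_0$, a uniform sup bound. Combined with $\|g-G\|_1<\e$ this should already pin the "top" of $g$ near $y=T^{1/(m-1)}$.

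The heart of the argument is a quantitative comparison between $g$ and the profile $G$ on both sides. I would argue by contradiction/continuity: suppose the graph distance exceeds $\delta$. Two cases. Case (a): there is a point $(x_*,y_*)$ on the graph of $g$ with $x_*$ well inside $[0,T]$ but $y_*$ far from $x_*^{1/(m-1)}$ — either $g(x_*)$ much larger, or much smaller, than $G(x_*)$. If $g(x_*)$ is much smaller, then by the nonincreasing property of $h(x)-x$, $g(x)^{m-1}\le h(x_*)+(x-x_*)$ for $x>x_*$, which keeps $g$ strictly below $G$ on a whole subinterval to the right of $x_*$ whose length is bounded below in terms of $\delta$; this creates an $L^1$ deficit $\gtrsim \delta^{\kappa}$, contradicting $\|g-G\|_1<\e$ once $\e<\delta^{\kappa}$. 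If $g(x_*)$ is much larger, use the slope bound to the \emph{left}: $h(x)\ge h(x_*)-(x_*-x)$ for $x<x_*$, so $g$ stays above $G$ on an interval of length $\gtrsim\delta^{\kappa}$ to the left, again forcing an $L^1$ excess incompatible with $\|g-G\|_1<\e$. Case (b): the support of $g$ extends far beyond $T$, say $g>0$ somewhere near $T+\eta$; by the right-support estimate above this again produces mass of order $\eta^{m/(m-1)}$ located to the right of $T$, where $G=0$, contradicting the $L^1$ bound for $\e$ small. In each case I track the dependence of $\eta$ or the interval length on $\delta$ through the explicit power $1/(m-1)$, obtaining a bound of the shape: graph distance $\le C\,\e^{\alpha}$ with $\alpha=\alpha(m)>0$ (one expects $\alpha$ of the order $\min\{1,(m-1)/m\}$ or similar), which is exactly the claimed "positive power of $\e$."

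The main obstacle I anticipate is making the implication "graph distance $>\delta$ $\Rightarrow$ $L^1$ discrepancy $\gtrsim\delta^{\kappa}$" uniform and sharp in the exponent, especially near the two corners: the origin $x=0$, where $G'$ blows up (if $m>2$) or vanishes (if $1<m<2$) and where the map $t\mapsto t^{1/(m-1)}$ is only Hölder, and the right corner $x=T$, where $G$ meets $0$ and $g$ can cross back and forth. Handling both $1<m<2$ and $m>2$ with a single power of $\e$ — and controlling $T(g)$ itself, which depends on $g$ through $\|g\|_1$ and hence is only close to $T(G)$, not equal to it — is the delicate bookkeeping; I would isolate it into a short lemma that says "if $h-\mathrm{id}$ is nonincreasing and $\int|h^{1/(m-1)}-(\cdot)_+^{1/(m-1)}\mathbf{1}_{[0,T]}|<\e$ then $h$ is within $C\e^{\alpha}$ of $(\cdot)\mathbf{1}_{[0,T']}$ in graph distance, with $|T-T'|\le C\e^{\alpha}$," and follow the corresponding step in \cite[Lemma 2.2]{EVZ} for the quantitative bound.
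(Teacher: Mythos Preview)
The paper does not actually prove this lemma: it simply states that it is an adaptation of \cite[Lemma~2.2]{EVZ} and observes that the hypothesis $\|g-G\|_1<\e$ implies the weaker integral condition used there. Your proposal is in the same spirit---you sketch the mechanism behind the EVZ argument and ultimately defer to \cite[Lemma~2.2]{EVZ} for the quantitative exponent---so the approaches coincide.

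One small slip worth noting: the one-sided bound $(g^{m-1})_x\le 1$ gives, for $x<x_0$, the \emph{lower} bound $h(x)\ge h(x_0)-(x_0-x)$, but for $x>x_0$ it only yields the \emph{upper} bound $h(x)\le h(x_0)+(x-x_0)$. Thus in your sup-norm step the interval on which $g$ is forced to stay positive is $(x_0-h(x_0),x_0)$, to the \emph{left} of $x_0$, not $(x_0,x_0+h(x_0))$ as you wrote; the integration and the conclusion $g(x_0)\le C\|g\|_1^{1/m}$ are otherwise unchanged. Your Case~(a)/(b) analysis uses the correct directions and is fine.
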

Notice that in \cite[Lemma 2.2]{EVZ} there is a weaker integral
condition, which is implied by our condition on the distance in
$L^1$, and the lemma is more general, holding true for any $k>0$
instead of $m-1$.

We want to apply it for $g=w$ solution to Eq. \eqref{PMEconv} and
$G=W$, the limit profile in \eqref{lim.conv}. In order to do it, we
need to prove that its conditions are fulfilled. The following
result, which is interesting by itself, will imply \eqref{interm1}.

\begin{lemma}\label{lem.est}
Let $w$ be a solution to \eqref{PMEconv} with initial condition
$w_0\in L^1(\real)\cap L^{\infty}(\real)$. Then
\begin{equation}\label{entropy}
\left(w^{m-1}\right)_{s}\leq\frac{1}{m\tau}
\end{equation}
\end{lemma}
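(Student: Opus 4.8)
The plan is to prove \eqref{entropy} as a one-sided (Oleinik / Aronson--B\'enilan type) gradient estimate for the pressure variable, obtained by coupling the parabolic maximum principle with an explicit ODE barrier. First I would pass to the pressure $v:=w^{m-1}$. A direct computation from \eqref{PMEconv} shows that $v$ solves
\begin{equation*}
v_\tau = m v v_{ss} + \frac{m}{m-1}\,v_s^2 - m v v_s ,
\end{equation*}
a (degenerate) parabolic equation whose principal coefficient $mv$ vanishes exactly on $\{w=0\}$. Differentiating once in $s$ and writing $P:=v_s=(w^{m-1})_s$, one obtains
\begin{equation*}
P_\tau = m v P_{ss} + \frac{m(m+1)}{m-1}\,P P_s - m v P_s - m P^2 .
\end{equation*}
The crucial structural feature is the genuinely dissipative term $-mP^2$ with no compensating lower-order term: at a point where $P(\cdot,\tau)$ has an interior spatial maximum one has $P_s=0$, $P_{ss}\le 0$, hence there $P_\tau\le -mP^2$.

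Next I would compare with the barrier $\Psi(\tau):=\frac{1}{m\tau}$, which solves $\Psi_\tau=-m\Psi^2$ and blows up as $\tau\to0^+$. If $M(\tau):=\sup_s P(\cdot,\tau)$ is finite for $\tau>0$ (locally bounded in $\tau$), the computation above gives $M'(\tau)\le -mM(\tau)^2$ in the sense of upper Dini derivatives; an elementary backward ODE comparison (if $M(\tau_1)>\Psi(\tau_1)$ for some $\tau_1$, then $M\ge z$ on the left of $\tau_1$, where $z$ solves $z_\tau=-mz^2$ with $z(\tau_1)=M(\tau_1)$ and blows up at some time in $(0,\tau_1)$, forcing $M\to+\infty$ in contradiction with local boundedness) then yields $M(\tau)\le\frac{1}{m\tau}$ for all $\tau>0$, which is precisely \eqref{entropy}. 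I would also point out that the bound is sharp: it is attained with equality on the support of the self-similar profile $W$ of \eqref{lim.conv}, for which $W^{m-1}=s/(m\tau)$.

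To make this rigorous I would run the argument on a family of smooth, strictly positive approximations $w_n$ of $w$ (regularising $w_0$ to be bounded below by $1/n$ and solving \eqref{PMEconv} on expanding intervals $(-n,n)$ with suitable boundary data): standard parabolic regularity for \eqref{PMEconv}, available since $w_n$ is then bounded away from $0$, provides the smoothness needed for the differentiations, and the equation is uniformly parabolic so the maximum principle applies directly; the decay of $w_n$ and $P_n$ towards the lateral boundary, inherited from $w_0\in L^1\cap L^\infty$, keeps the spatial supremum of $P_n$ finite and prevents it from escaping to infinity. The key point is that the resulting bound $(w_n^{m-1})_s\le\frac{1}{m\tau}$ is independent of $n$ (the barrier forgets the data), so it survives the passage $n\to\infty$ via the local uniform convergence $w_n\to w$ and the stability of one-sided distributional inequalities, giving \eqref{entropy} for $w$ in $\mathcal{D}'(\real)$ for every $\tau>0$.

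I expect the main obstacle to be exactly this last, technical layer: controlling the approximations uniformly near spatial infinity (and near the free boundary in the limit) and legitimising the ``evaluate the PDE at the spatial maximum'' step in spite of the degeneracy $mv=0$ on $\{w=0\}$ --- the positive-approximation scheme is designed precisely to bypass this, but the uniform-in-$n$ tail estimates need some care. If one prefers to avoid the regularisation, the estimate can instead be quoted from the literature on one-dimensional convection--diffusion equations, where Oleinik-type bounds of this kind for \eqref{PMEconv} are classical (cf. \cite{EVZ, LS98}).
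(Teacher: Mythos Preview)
Your proposal is correct and follows essentially the same Bernstein-type strategy as the paper: pass to the pressure variable, differentiate once to get a parabolic equation for the pressure gradient with a genuinely dissipative $-P^2$ term, and compare with the spatially constant barrier $1/(m\tau)$, making the argument rigorous via strictly positive approximations. The only cosmetic differences are that the paper normalises with $z=\tfrac{m}{m-1}w^{m-1}$ instead of $v=w^{m-1}$ and phrases the comparison as ``$\overline{p}(\tau)=1/((m-1)\tau)$ is a (super)solution of the $p$-equation with infinite initial data'' rather than your ``evaluate at the spatial maximum and run an ODE comparison on $M(\tau)$''; these are equivalent presentations of the same maximum-principle idea.
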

\begin{proof}
We use a Bernstein technique. At a formal level, consider
$$
z(s,\tau):=\frac{m}{m-1}w(s,\tau)^{m-1}.
$$
Then by straightforward calculations (see \cite{Re97}), the equation
satisfied by $z$ is
\begin{equation}\label{interm2}
z_{\tau}=(m-1)zz_{ss}+z_{s}^2-(m-1)zz_s.
\end{equation}
Let then $p=z_s$. By differentiating in \eqref{interm2}, we obtain
the equation solved by $p$:
\begin{equation}\label{interm3}
p_{\tau}=(m+1)pp_s+(m-1)zp_{ss}-(m-1)p^2-(m-1)zp_s.
\end{equation}
Let then $\overline{p}(\tau)=1/(m-1)\tau$. We notice that
$\overline{p}$ is a solution to \eqref{interm3} and
$\overline{p}(s,0)=+\infty>p(s,0)$ for any $s\in\real$. Thus,
$\overline{p}$ is a supersolution to our problem, whence by standard
comparison we get \eqref{entropy}.

The above is a formal calculation, that holds true rigorously for
solutions that are uniformly positive. Thus, for a rigorous proof,
we have to consider solutions having $0<\e\leq w_0(s)$, for which
all previous calculations apply, then approximate as $\e\to0$. We
omit the details as this last technical step is quite standard (see
\cite[Lemma 1.1]{EVZ}, \cite[Lemma 2.10]{LS98}, in the latter a
fully detailed proof of such approximation being given).
\end{proof}
We are now in position to check the conditions in Lemma
\ref{lem.graph}. Consider the new function and variable
$$
\overline{w}(y,\tau)=\tau^{1/m}w(s\tau^{-1/m},\tau), \quad
y=s\tau^{-1/m}
$$
and notice that, for any $\tau>0$, we have
\begin{equation}\label{interm7}
\frac{\partial}{\partial
y}\overline{w}^{m-1}(y,\tau)=\tau\frac{\partial}{\partial
s}w^m(s\tau^{-1/m},\tau)\leq\frac{1}{m}.
\end{equation}
In the new variables $(y,\tau)$, we apply Lemma \ref{lem.graph} for
the following functions:
$$
g(y,\tau)=m^{1/(m-1)}\overline{w}(y,\tau), \quad
G(y,\tau)=m^{1/(m-1)}\tau^{1/m}W(y,\tau),
$$
where $W$ is the profile in \eqref{lim.conv}. Notice that, in the
new variables, $G$ has the required form, and the convergence in
$L^1$ is insured by \eqref{L1conv}. The bound for $(g^{m-1})_{y}$
follows from \eqref{interm7}. Thus, an application of Lemma
\ref{lem.graph} gives that $\overline{w}$ converges to $W$ in the
sense of graphs in the new variables. We end the proof by undoing
the change of variables $(s,\tau)\mapsto(y,\tau)$ and transformation
\eqref{tr1}.
\end{proof}
\begin{remark}
Let us notice that, in the above proof, we show in particular that
solutions $w$ to \eqref{PMEconv} satisfy
$$
\tau^{1/m}w(y,\tau)\longrightarrow\tau^{1/m}W(y,\tau), \quad
y=s\tau^{-1/m},
$$
with convergence in the sense of graphs. This is a slight
improvement of \cite[Theorem 1.4]{LS98}.
\end{remark}

\section{Radially symmetric solutions. Asymptotic behavior when
$u_0(0)=K>0$}\label{sec.th2}

In this section we prove Theorem \ref{th.2}. Similar to the previous
section, we begin with a formal calculation which will give us a
guess of the profile. As we expect the value $K>0$ at the origin to
maintain, no time decay is allowed in this case, thus we start from
another ansatz to plug in \eqref{PMEconv}:
\begin{equation}\label{ansatz2}
w(s,\tau)=v\left(\frac{s}{\tau},\tau\right), \quad y=\frac{s}{\tau},
\quad \overline{\tau}=\log(1+\tau).
\end{equation}
By straightforward calculations, we obtain that
$v=v(y,\overline{\tau})$ satisfies the following equation
\begin{equation}\label{interm4}
v_{\overline{\tau}}-yv_{y}=(v^m)_{y}+\frac{1}{e^{\overline{\tau}}-1}(v^m)_{yy}.
\end{equation}
We are again in a case of asymptotic simplification where the effect
of the diffusion term is negligible in the limit. Passing formally
to the limit as $\overline{\tau}\to\infty$ in \eqref{interm4}, and
assuming the limit $v^*=v^*(y)$ to be stationary in the new
variables, we deduce that $v^*$ solves the following equation
$$
yv^*_y=mv^{m-1}v^*_y,
$$
hence, either $v^*$ is constant (in some interval), or
$v^*(y)=(y/m)^{1/(m-1)}_{+}$ in the complementary part. As the
constant part is expected to be equal to the initial value $K$, and
undoing the change of variables \eqref{ansatz2}, we expect the
asymptotic profile for \eqref{PMEconv} in this case to be given by
\begin{equation}\label{lim.conv2}
V(s,\tau)=\left\{\begin{array}{lll}0, & {\rm if} \ s\leq0,
\\ \left[\frac{s}{m\tau}\right]^{1/(m-1)}, & {\rm if} \
0<s<mK^{m-1}\tau, \\ K, & {\rm if} \ s\geq
mK^{m-1}\tau,\end{array}\right.
\end{equation}
Notice that this function is continuous, departing strongly from the
profile $W$ introduced in \eqref{lim.conv}, which develops a shock
curve. Undoing now the transformation \eqref{tr1}, we arrive to our
expected profile
$$
E_{K}(x,t)=\left\{\begin{array}{llll}K, & {\rm if} \ 0\leq|x|\leq
e^{-mK^{m-1}(N-2)t},
\\ \left[-\frac{1}{m(N-2)}\frac{\log|x|}{t}\right]^{1/(m-1)}, & {\rm
if} \ e^{-mK^{m-1}(N-2)t}<|x|<1, \\0, & {\rm if} \
|x|\geq1,\end{array}\right.
$$
which coincides with the one in Theorem \ref{th.2}.

All these calculations are, obviously, formal, showing how the
profile $E_{K}$ arises. We are now able to prove rigorously the
large-time convergence towards the profile $E_{K}$.

\medskip

\begin{proof}[Proof of Theorem \ref{th.2}]
Let $u_0$, $u$ as in Theorem \ref{th.2}. By the transformation
\eqref{tr1}, we obtain a solution $w$ to \eqref{PMEconv}, such that
$0\leq w_0(s)\leq K$ for any $s\in\real$, and
$\lim\limits_{s\to\infty}w_0(s)=K$,
$\lim\limits_{s\to-\infty}w_0(s)=0$. The proof is divided into two
big steps: first, we reduce the problem to the case when $w_0$ is
nondecreasing (or equivalently the initial data $u_0$ in initial
variables is nonincreasing), and second, we prove the theorem under
this extra hypothesis.

\medskip

\noindent \textbf{Big step A: Reduction to the case of nondecreasing
initial data.}

\medskip

This is based on the following standard result.
\begin{lemma}\label{lem.decr}
Let $w$ be a solution to \eqref{PMEconv} as above such that its
initial condition $w_0$ is nondecreasing. Then $w(\tau)$ is
nondecreasing in $s$ for any $\tau>0$.
\end{lemma}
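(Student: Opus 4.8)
The plan is to use the comparison principle (Proposition \ref{prop.contraction}) together with the translation invariance of equation \eqref{PMEconv}. The key observation is that \eqref{PMEconv} has constant coefficients, so for any $h>0$ the shifted function $w_h(s,\tau):=w(s+h,\tau)$ is again a solution of \eqref{PMEconv}, now with initial datum $w_{0,h}(s)=w_0(s+h)$. Since $w_0$ is nondecreasing, we have $w_0(s+h)\geq w_0(s)$ for every $s\in\real$, i.e. $w_{0,h}\geq w_0$ pointwise at $\tau=0$.

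First I would invoke the comparison principle for \eqref{PMEconv}: from the $L^1_2$-contraction principle of Proposition \ref{prop.contraction} applied in the transformed variables (equivalently, the standard $L^1$-comparison for the one-dimensional convection-diffusion equation, as available in the mild-solution theory of \cite{BT, DK, G1, LS98}), the ordering of the initial data is preserved in time: $w_{0,h}\geq w_0$ on $\real$ implies $w_h(s,\tau)\geq w(s,\tau)$ for all $s\in\real$ and all $\tau>0$. Spelling this out, $w(s+h,\tau)\geq w(s,\tau)$ for every $h>0$, every $s\in\real$, and every $\tau>0$, which is exactly the statement that $w(\cdot,\tau)$ is nondecreasing for each fixed $\tau>0$.

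One technical point to address: the comparison principle in Proposition \ref{prop.contraction} is stated for strong solutions of \eqref{eq1}, whereas here we work directly with \eqref{PMEconv}; but since the transformation \eqref{tr1} is a monotone change of the spatial variable ($r=e^{\theta s}$ with $\theta<0$, so $r$ decreasing in $s$), translations in $s$ correspond to dilations in $r$, and comparison transfers cleanly. Alternatively, and more simply, one may just quote the $L^1$-comparison principle for mild solutions of the one-dimensional porous medium equation with convection, which is part of the well-posedness theory recalled in Section \ref{sec.tr}. I do not anticipate a serious obstacle here; the only mild subtlety is ensuring that the shifted datum $w_{0,h}$ still lies in the admissible class $L^1(\real)\cap L^\infty(\real)$, which is immediate since translation preserves both norms. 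Hence the monotonicity is propagated and the lemma follows.
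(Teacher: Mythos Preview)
Your argument via translation invariance and comparison is correct and is a genuinely different route from the paper's. The paper instead differentiates the equation: it writes down the PDE \eqref{interm3} satisfied by $p=\frac{m}{m-1}(w^{m-1})_s$, observes that $p\equiv 0$ is a solution, and invokes a comparison principle for that derived equation to conclude $p\geq 0$ is preserved. Your approach is more elementary in that it avoids deriving and analyzing a new equation (and the attendant approximation by uniformly positive solutions needed to justify those formal manipulations); the paper's approach, on the other hand, reuses machinery already in place for Lemma~\ref{lem.est}, so it costs nothing extra there.

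One small correction: in the setting of Theorem~\ref{th.2} where this lemma is applied, $w_0$ is \emph{not} in $L^1(\real)$, since $\lim_{s\to\infty}w_0(s)=K>0$. So your remark that ``translation preserves both norms'' is beside the point. This does not break the argument, however: either appeal to a comparison principle for bounded (not necessarily integrable) solutions of \eqref{PMEconv}, available in the references you cite, or---cleaner---run the argument in the original variables. A translation $s\mapsto s+h$ corresponds to a dilation $x\mapsto \lambda x$ with $\lambda=e^{\theta h}$; equation \eqref{eq1} is invariant under such dilations, the dilated datum $u_0(\lambda\,\cdot)$ remains in $L^1_2(\real^N)$, and Proposition~\ref{prop.contraction} then gives the comparison directly.
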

We only sketch the proof, see also \cite{ILV}[Lemma 3.3].
\begin{proof}
The general principle is to derive the equation satisfied by the
derivative $w_s$. As we work with nonnegative functions, it suffices
to derive it for any power, in particular for
$p=m/(m-1)(w^{m-1})_s$, which is \eqref{interm3}. As \eqref{interm3}
is parabolic, fulfills a comparison principle and $p\equiv0$ is a
solution, it follows that $(w^{m-1}(\cdot,\tau))_s\geq0$ for any
$\tau>0$, whence $w^{m-1}$ is nondecreasing, hence also $w$.
\end{proof}
Suppose that Theorem \ref{th.2} is proved for $w_0$ nondecreasing
(which is equivalent in the initial variables to $u_0$
nonincreasing). Let now a general solution $u$ to \eqref{eq1} such
that $u_0$ satisfies \eqref{initdata2}. Pass again to $w$ solution
to \eqref{PMEconv}, where $0\leq w_0(s)\leq K$ for any $s\in\real$.
Since
\begin{equation}\label{interm9}
\lim\limits_{s\to-\infty}w_0(s)=0, \quad
\lim\limits_{s\to\infty}w_0(s)=K,
\end{equation}
we can easily find some $w_0^{l}$, $w_0^{u}$ which are
nondecreasing, satisfying the same limits as in \eqref{interm9} and
such that
$$
w_0^l(s)\leq w_0(s)\leq w_0^{u}(s), \quad {\rm for \ any \ }
s\in\real.
$$
Let $w^l$, $w^u$ be the solutions to \eqref{PMEconv} with initial
data $w_0^l$, $w_0^u$ respectively. Then, by standard comparison and
Lemma \ref{lem.decr}, $w^l(\tau)$, $w^u(\tau)$ are nondecreasing
with respect to $s$ at any time $\tau>0$, and $w^l(s,\tau)\leq
w(s,\tau)\leq w^u(s,\tau)$, for any
$(s,\tau)\in\real\times(0,\infty)$. Applying Theorem \ref{th.2}
(supposed to be already known for nondecreasing solutions) for
$w^l$, $w^u$, we get the desired convergence result for $w$, whence
for our solution $u$ after undoing the transformation \eqref{tr1}.

\medskip

\noindent \textbf{Big step B: Proof for $w_0$ nondecreasing.}

\medskip

From now on, in all this section we work with solutions $w$ to
\eqref{PMEconv} with nondecreasing initial data $w_0$ as above. We
employ the four-step method, which is by now a standard general
strategy of proving large time behavior for nonlinear diffusion
equations (see e. g. \cite{NR, Re97, V}). We have to adapt the
technique to the less usual case when the maximum order term will
have no effect for large times, as it happens for Eq.
\eqref{PMEconv}.

\medskip

\noindent \textbf{Step 1. Rescaling.} Define for any $\lambda>0$,
$$
w_{\lambda}(s,\tau)=w(\lambda s,\lambda\tau).
$$
Then, $w_{\lambda}$ solves the following equation:
\begin{equation}\label{resc.eq}
(w_{\lambda})_{\tau}=\frac{1}{\lambda}(w_{\lambda}^m)_{ss}-(w_{\lambda}^m)_s.
\end{equation}
This already suggests the asymptotic simplification we expect to
get.

\medskip

\noindent \textbf{Step 2. Uniform estimates.} We want to obtain
estimates for $w_{\lambda}$ that do not depend on $\lambda$. Since
$w_0\in L^{\infty}(\real)$, we readily get (by standard comparison)
that $|w(s,\tau)|\leq\|w_0\|_{\infty}$, whence
\begin{equation}\label{unif.est1}
|w_{\lambda}(s,\tau)|\leq\|w_0\|_{\infty}, \quad {\rm for \ any} \
s\in\real, \ \tau>0, \ \lambda>0.
\end{equation}
Moreover, it is proved in \cite[Lemma 2.10]{LS98} that in our
conditions,
$$
\left|(w^m)_s(s,\tau)\right|\leq\frac{2}{m-1}\|w_0\|_{\infty}t^{-1},
$$
where we can add the modulus in the left-hand side since
$(w^m)_s\geq0$, as $w(\cdot,\tau)$ is nondecreasing in $s$ for any
$\tau>0$, due to Lemma \ref{lem.decr}. Hence
\begin{equation}\label{unif.est2}
\left|(w_{\lambda}^m)_s(s,\tau)\right|=\lambda\left|(w^m)_s(\lambda
s,\lambda\tau)\right|\leq\frac{2\lambda}{m-1}\|w_0\|_{\infty}(\lambda
t)^{-1}=\frac{2}{m-1}\|w_0\|_{\infty}t^{-1}.
\end{equation}
Both estimates \eqref{unif.est1} and \eqref{unif.est2} are uniform
with respect to $\lambda$.

\medskip

\noindent \textbf{Step 3. Passage to the limit.} Due to the previous
estimates, we obtain that the family $\{w_{\lambda}\}$ is uniformly
equicontinuous in compact subsets. By Ascoli-Arzelá Theorem, there
exists a subsequence (not relabeled) $\{w_{\lambda}\}$ that
converges uniformly on compact sets to some limit profile
$w_{\infty}$. We can then pass to the limit in the weak formulation
of the equation \eqref{resc.eq}. Recall that $w_{\lambda}$ satisfies
that
$$
\int_Q\left[\Phi_{s}\left(\frac{1}{\lambda}(w_{\lambda}^m)_s-w_{\lambda}^m\right)-\Phi_{\tau}w_{\lambda}\right]\,ds\,d\tau=0,
\quad {\rm for \ any \ } \Phi\in\cd(Q), \ Q=\real\times[0,\infty).
$$
Since $w_{\lambda}\to w_{\infty}$ uniformly on compact sets (in
particular on the support of $\Phi$) and $(w_{\lambda}^m)_s$ is
bounded uniformly with respect to $\lambda$, we let
$\lambda\to\infty$ to get that
$$
\int_Q\left[\Phi_{s}w_{\infty}^m+\Phi_{\tau}w_{\infty}\right]\,ds\,d\tau=0,
\quad {\rm for \ any \ } \Phi\in\cd(Q),
$$
whence $w_{\infty}$ is a weak solution to the conservation law
\begin{equation}\label{CL}
w_{\infty,\tau}+(w_{\infty}^m)_s=0.
\end{equation}

\medskip

\noindent \textbf{Step 4. Identification of the limit.} It remains
to show that $w_{\infty}=V$, where $V$ is given in
\eqref{lim.conv2}. To this end, we show first that $w_{\infty}$
takes a Heaviside function as initial trace, that is
\begin{equation*}
\lim\limits_{\tau\to0}w_{\infty}(s,\tau)=\left\{\begin{array}{ll}K,
& {\rm if} \ s\geq0, \\ 0, & {\rm if} \ s<0,\end{array}\right.
\end{equation*}
in the sense of distributions, which is equivalent to prove that
\begin{equation}\label{interm5}
\lim\limits_{\tau\to0}\left[\int_{-\infty}^{\infty}w_{\infty}(s,\tau)\Phi(s)\,ds-K\int_{0}^{\infty}\Phi(s)\,ds\right]=0,
\end{equation}
for any $\Phi\in\cd(\real)$. For any $\Phi\in\cd(\real)$, we
estimate:
\begin{equation*}
\begin{split}
\left|\int_{-\infty}^{\infty}\right.&\left.(w_{\lambda}(s,\tau)-w_{\lambda}(s,0))\Phi(s)\,ds\right|=\left|\int_{-\infty}^{\infty}\int_0^{\tau}w_{\lambda,\tau}(s,\theta)\Phi(s)\,d\theta\,ds\right|\\
&=\left|\int_0^{\tau}\int_{-\infty}^{\infty}\left[\frac{1}{\lambda}(w_{\lambda}^m)_{ss}(s,\theta)-(w_{\lambda}^m)_{s}(s,\theta)\right]\Phi(s)\,ds\,d\theta\right|\\
&=\left|\int_0^{\tau}\left[\frac{1}{\lambda}\int_{-\infty}^{\infty}(w_{\lambda}^m)(s,\theta)\Phi_{ss}(s)\,ds+\int_{-\infty}^{\infty}(w_{\lambda}^m)(s,\theta)\Phi_{s}(s)\,ds\right]\,d\theta\right|\\
&\leq\int_0^{\tau}\left|\int_{-\infty}^{\infty}(w_{\lambda}^m)(s,\theta)\Phi_s(s)\,ds\right|\,d\theta+\frac{1}{\lambda}\int_0^{\tau}\left|\int_{-\infty}^{\infty}(w_{\lambda}^m)(s,\theta)\Phi_{ss}(s)\,ds\right|\,d\theta\\
&\leq C_1\|\Phi_s\|_{\infty}|{\rm
supp}\Phi|\tau+\frac{C_2}{\lambda}\|\Phi_{ss}\|_{\infty}|{\rm
supp}\Phi|\tau=C(\Phi)\tau,
\end{split}
\end{equation*}
where by $|{\rm supp}\Phi|$ we understand the Lebesgue measure of
the (compact) support of $\Phi$. We have thus proved that
\begin{equation}\label{interm6}
\lim\limits_{\tau\to0}\int_{-\infty}^{\infty}w_{\lambda}(s,\tau)\Phi(s)\,ds=\int_{-\infty}^{\infty}w_{\lambda}(s,0)\Phi(s)\,ds,
\end{equation}
for any $\Phi\in\cd(\real)$ and $\lambda>0$, the convergence being
uniform with respect to $\lambda$ in any interval
$[\lambda_0,\infty)$.

It still remains to prove that
\begin{equation}\label{interm8}
\lim\limits_{\lambda\to\infty}\int_{-\infty}^{\infty}w_{\lambda}(s,0)\Phi(s)\,ds=K\int_{0}^{\infty}\Phi(s)\,ds,
\end{equation}
for any $\Phi\in\cd(\real)$. To this end, we calculate:
\begin{equation*}
\begin{split}
\int_{-\infty}^{\infty}w_{\lambda}(s,0)\Phi(s)\,ds&=\left(\int_{-\infty}^{0}+\int_0^{\infty}\right)w_{0}(\lambda
s)\Phi(s)\,ds=K\int_{0}^{\infty}\Phi(s)\,ds\\&+\int_{0}^{\infty}(w_0(\lambda
s)-K)\Phi(s)\,ds+\int_{-\infty}^{0}w_0(\lambda s)\Phi(s)\,ds.
\end{split}
\end{equation*}
Recall that $\lim\limits_{s\to\infty}w_0(s)=K$ and
$\lim\limits_{s\to-\infty}w_0(s)=0$. This implies
$$
\lim\limits_{\lambda\to\infty}w_0(\lambda s)\Phi(s)=0, \quad {\rm
for \ any \ } s<0,
$$
and
$$
\lim\limits_{\lambda\to\infty}(w_0(\lambda s)-K)\Phi(s)=0, \quad
{\rm for \ any \ } s>0,
$$
with pointwise convergence in both cases. Moreover, since $\Phi$ is
compactly supported and $w_0\in L^{\infty}(\real)$, we can apply the
Lebesgue's dominated convergence theorem to find
$$
\lim\limits_{\lambda\to\infty}\int_{-\infty}^{0}w_0(\lambda
s)\Phi(s)\,ds=\lim\limits_{\lambda\to\infty}\int_{0}^{\infty}(w_0(\lambda
s)-K)\Phi(s)\,ds=0,
$$
to conclude that \eqref{interm8} holds. Joining \eqref{interm6} and
\eqref{interm8}, we readily get \eqref{interm5}, as wanted. Thus,
$w_{\infty}$ is a generalized (entropy) solution for the
conservation law \eqref{CL} with initial condition $KH$. By
Kruzhkov's Theorem \cite{KR, Serre}, we find that $w_{\infty}\equiv
V$.

There is a last part in the four-step method, that is, rephrazing
the results in terms of the initial variables. We have just proved
that
$$
|w_{\lambda}(s,\tau)-w_{\infty}(s,\tau)|\to0 \quad {\rm as} \
\lambda\to\infty,
$$
uniformly in $(s,\tau)$ in compact subsets of
$\real\times[0,\infty)$. We put $\tau=1$, then we relabel
$\lambda=\tau$, to get that
$$
|w(s\tau,\tau)-w_{\infty}(s,1)|\to0 \quad {\rm as} \ \tau\to\infty,
$$
uniformly for $s$ in compact sets of $\real$. Choosing compacts of
the type $[-R,R]$ for $R>0$ large, this is equivalent to say that
$$
|w(s,\tau)-V(s,\tau)|\to0 \quad {\rm as} \ \tau\to\infty,
$$
uniformly for $s\in[-R\tau,R\tau]$. By undoing transformation
\eqref{tr1} and going back to the initial variables $(x,t)$, we
obtain \eqref{conv.unif} in sets of the type $\{e^{-Rt}\leq|x|\leq
e^{Rt}\}$, for any $R>0$.

\medskip

\noindent \textbf{Step 5. Behavior at the origin.} We go back to
initial variables and show that $u(0,t)=K$ for any $t>0$. Assume, by
contradiction, that there exists $t_0>0$ such that $u(0,t_0)=K_1<K$
(if $K_1>K$, things are completely similar). Then, we can start the
evolution taking $t=t_0$ as initial time; by uniqueness, the
solution to the Cauchy problem with $v_0(x)=u(x,t_0)$ will be
$v(x,t)=u(x,t+t_0)$. Applying \eqref{conv.unif} for this $v$, we
find that
$$
\lim\limits_{t\to\infty}|u(x,t+t_0)-E_{K_1}(x,t)|=0,
$$
uniformly in any set of the form $\{e^{-Rt}\leq|x|\leq e^{Rt}\}$ for
any $R>0$. In particular, choosing $R$ sufficiently large, (the
precise condition is $R>mK^{m-1}(N-2)$), we reach a contradiction,
as in the set $\{e^{-Rt}\leq|x|\leq e^{Rt}\}$, the two profiles
$E_{K}$ and $E_{K_1}$ are essentially different:
$\lim\limits_{t\to\infty}\|E_{K}(t)-E_{K_1}(t)\|_{\infty}=K-K_1>0$.
Hence $u(0,t)=K$ for any $t>0$.

\medskip

\noindent \textbf{Step 6. Uniform convergence in the whole space.}
We have shown up to now that \eqref{conv.unif} holds true uniformly
in sets of the form
$$
\{e^{-Rt}\leq|x|\leq e^{Rt}\}, \quad {\rm for \ any \ } R>0.
$$
In order to extend the uniform convergence to the whole $\real^N$,
we essentially use the fact that $u(t)$ is nonincreasing for any
$t>0$. Let some $\e_0>0$ fixed. Then, for any $t>0$, we have
$u(\e_0,t)\leq u(x,t)=u(|x|,t)\leq u(0,t)=K$, that is, $u(\cdot,t)$
is uniformly Cauchy in $[0,\e_0]$, whence the uniform convergence is
extended up to the origin. A similar argument holds for the tail
part $\{|x|\geq e^{Rt}\}$ closing the proof.
\end{proof}

\section{Asymptotic convergence for general
solutions}\label{sec.general}

We are now ready to prove our results for non-radially symmetric
solutions, which are Theorem \ref{th.2b} and Corollary
\ref{cor.decay}.
\begin{proof}[Proof of Theorem \ref{th.2b}]
Let $u$ be a solution to \eqref{eq1} with initial condition $u_0$
satisfying \eqref{initdata}, \eqref{initdata2} and
\eqref{initcond3}, with $u_0(0)=K>0$. We define the following
radially symmetric functions:
\begin{equation}\label{evolvents}
u_0^{\pm}:\real^N\mapsto[0,\infty), \quad u_0^{-}(r)=\inf\{u_0(x):
|x|=r\}, \quad u_0^{+}(r)=\sup\{u_0(x): |x|=r\}, \quad r=|x|.
\end{equation}
It is obvious that $u_0^{-}(x)\leq u_0(x)\leq u_0^{+}(x)$, for any
$x\in\real^N$, and $u_0^{-}(0)=u_0^{+}(0)=K$. Moreover, both
$u_0^{-}$ and $u_0^{+}$ are continuous at $x=0$ and belong to
$L^{\infty}(\real^N)$, as $0< \|u_0^{-}\|_{\infty}=
\|u_0\|_{\infty}=\|u_0^{+}\|_{\infty}=K$.

It remains to check that $u_0^{-},u_0^{+}\in L^1_2(\real^N)$. We
only have to check this in sets that are close to $x=0$ and to
infinity. Since $u_0(0)=K$ and it is continuous, there exists $r>0$
such that $K-1<u_0(x)<K+1$, for any $x\in B(0,r)$. By definition, we
also have $K-1<u_0^{-}(x)\leq u_0^{+}(x)<K+1$, for any $x\in
B(0,r)$, hence
$$
0<\int_{B(0,r)}|x|^{-2}u_0^{-}(x)\,dx\leq\int_{B(0,r)}|x|^{-2}u_0^{+}(x)\,dx\leq
(K+1)\int_{B(0,r)}|x|^{-2}\,dx<\infty,
$$
as we are in dimension $N\geq3$. Concerning sets that are "close to
infinity", we deduce from \eqref{initcond3} that
$$
u_0^{-}(x),u_0^{+}(x)\leq|x|^{2-N-\delta}, \quad {\rm for \ any \ }
x\in\real^N\setminus B(0,R),
$$
whence
$$
0<\int_{\real^N\setminus
B(0,R)}|x|^{-2}u_0^{-}(x)\,dx\leq\int_{\real^N\setminus
B(0,R)}|x|^{-2}u_0^{+}(x)\,dx\leq\int_{\real^N\setminus
B(0,R)}|x|^{-N-\delta}\,dx<\infty.
$$
This, together with the uniform boundedness in the compact set
$\overline{B(0,R)}\setminus B(0,r)$, show that $u_0^{-},u_0^{+}\in
L^1_2(\real^N)$. By Theorem \ref{th.wp}, there exist $u^{-},u^{+}$
solutions to Eq. \eqref{eq1} with initial data $u_0^{-},u_0^{+}$
respectively; as Eq. \eqref{eq1} is rotationally invariant, $u^{-}$
and $u^+$ are radially symmetric and they fulfill the assumptions in
Theorem \ref{th.2}. It follows that
$$
\lim\limits_{t\to\infty}|u^{-}(x,t)-E_{K}(x,t)|=\lim\limits_{t\to\infty}|u^{+}(x,t)-E_{K}(x,t)|=0,
$$
uniformly in $\real^N$. By standard comparison, $u^{-}(x,t)\leq
u(x,t)\leq u^{+}(x,t)$ for any $(x,t)\in\real^N\times[0,\infty)$,
and the conclusion of Theorem \ref{th.2b} follows in an obvious way.
Proof of Theorem \ref{th.2b} under conditions in Remark \ref{rmk} is
totally similar.
\end{proof}

\begin{proof}[Proof of Corollary \ref{cor.decay}]
Define $u_0^{\pm}$ and $u^{\pm}$ as above. Then $u^{+}$ and $u^{-}$
are radially symmetric solutions satisfying the conditions in
Theorem \ref{th.1}, and $u^{-}$ is nontrivial due to the condition
\eqref{inf0}. By Theorem \ref{th.1b}, $\|u^{\pm}(t)\|_{\infty}$
decay exactly with rate $t^{-1/m}$. Since by comparison,
$u^{-}(x,t)\leq u(x,t)\leq u^{+}(x,t)$ for any
$(x,t)\in\real^N\times[0,\infty)$, the same decay rate holds true
for $\|u(t)\|_{\infty}$.
\end{proof}
Notice that we cannot have a more precise asymptotic convergence
result for general solutions $u$ to Eq. \eqref{eq1} through this
method, since the limit profiles depend essentially on the mass
$M_{u_0}$, and there is no obvious connection between $M_{u_0}$ and
$M_{u_0^{\pm}}$.

\section{Further results, extensions and
applications}\label{sec.final}

In this final section, we gather some facts that, at a formal level,
extend or apply our analysis to other equations or different
conditions.

\subsection{A different critical density: $\gamma=N-(N-2)/m$}

In a previous work \cite[Theorem 2.1]{IRS}, it has been noticed that
Eq. \eqref{eq1} can be mapped, at the level of radially symmetric
solutions, into a similar equation presenting a different, but also
critical, density. More precisely, at a formal level, given a
radially symmetric solution $u$ to Eq. \eqref{eq1}, we define
\begin{equation}\label{tr2}
\tilde{u}(r,t):=r^{(2-N)/m}u(r^{-1},t), \quad r=|x|, \end{equation}
and find (\cite{IRS}) that $\tilde{u}$ is a radially symmetric
solution to
\begin{equation}\label{gama2}
|x|^{-\gamma_2}\tilde{u}_t=\Delta\tilde{u}^m, \quad
\gamma_2=N-\frac{N-2}{m}.
\end{equation}
The critical behavior of the exponent $\gamma_2$ has been analyzed
in previous works as \cite{KRV10}, \cite[Subsection 3.3]{IRS}. In
particular, the theory developed in \cite[Section 6]{KRV10} holds
exactly for $\gamma<\gamma_2$ but fails to hold for this borderline
case.

Taking into account \eqref{tr2} and our results, and refraining from
performing a rigorous analysis of Eq. \eqref{gama2}, we can give
some ideas about what is expected to happen with its solutions. The
profiles $E_K$ and $F$ transform into
\begin{equation*}
\tilde{E}_{K}(x,t)=\left\{\begin{array}{ll}0,& {\rm if} \ |x|\leq1,\\
|x|^{(2-N)/m}\left[\frac{1}{m(N-2)}\frac{\log|x|}{t}\right]_{+}^{1/(m-1)},&
{\rm if} \ 1<|x|<e^{m(N-2)K^{m-1}t},\\ |x|^{(2-N)/m}K, & {\rm if} \
|x|\geq e^{m(N-2)K^{m-1}t},\end{array}\right.
\end{equation*}
and
\begin{equation*}
\tilde{F}(x,t)=\left\{\begin{array}{ll}t^{-1/m}|x|^{(2-N)/m}\left[\frac{\log|x|t^{-1/m}}{m(N-2)}\right]_{+}^{1/(m-1)},
& {\rm if} \ 0\leq|x|<e^{kt^{1/m}},\\0, & {\rm if} \ |x|\geq
e^{kt^{1/m}},\end{array}\right.
\end{equation*}
the first being supported in the interval $[1,\infty)$ and the
second in $[1,e^{kt^{1/m}}]$.

Moreover, there exists an explicit family of self-similar solutions
to \eqref{gama2}, that is obtained starting from the logarithmic
Barenblatt solutions to \eqref{eq1} given in \eqref{Bar}; by mapping
them via \eqref{tr2}, we deduce explicit solutions to \eqref{gama2}
having the self-similar form
\begin{equation}\label{Bar2}
\tilde{B}_{D}(x,t)=t^{-1/m}U_D(|x|t^{1/(m-1)(N-2)}), \quad
U_D(\xi)=\xi^{(2-N)/m}\left[D+\frac{1}{m(N-2)}\log\xi\right]_{+}^{1/(m-1)},
\end{equation}
for any $D>0$. Up to our knowledge, the self-similar functions in
\eqref{Bar2} are new. Notice that they have a time decay $t^{-1/m}$,
but also a backward evolution of the support, that is,
$$
{\rm supp}\tilde{B}_D=\left\{(x,t)\in\real^N\times(0,\infty):
|x|>e^{-Dm(N-2)}t^{-1/(m-1)(N-2)}\right\}.
$$
By applying transformation \eqref{tr2} to our results, it is
expected then that $\tilde{E}_K$ be the family of general asymptotic
profiles of solutions $\tilde{u}$ to \eqref{gama2} decaying at
infinity exactly like $K|x|^{(2-N)/m}$, with
$\lim\limits_{|x|\to0}\tilde{u}(x,t)=0$ while $\tilde{F}$ is the
general asymptotic profile of solutions $\tilde{u}$ to \eqref{gama2}
having the same behavior at the origin but a weaker decay as
$|x|\to\infty$.

Since \eqref{tr2} is an inversion, a stronger role will play our new
explicit solution in \eqref{Bar2}, with respect to large-time
behavior in \emph{inner regions}, close to the origin. Indeed, by
rephrazing Theorem \ref{th.3} for $\tilde{u}$ and performing the
changes to self-similar variables as in \eqref{Bar2}, one gets the
following expected asymptotic convergence for $\gamma=\gamma_2$:
$$
\lim\limits_{t\to\infty}|\tilde{u}(x,t)-\tilde{B}_D(x,t)|=0,
$$
uniformly in sets of the form $|x|\leq Kt^{-1/(m-1)(N-2)}$ for any
$K>0$, that is, in small \emph{inner sets} shrinking to the origin
as $t\to\infty$. This result will be interesting from the point of
view of explaining the influence of the singularity, which is the
most important feature of \eqref{gama2}.

A rigorous study of this critical case will be left for future work.

\subsection{Some open questions}

Related to the analysis performed in the present work, we leave
below a list of, in our opinion, interesting questions that can be
addressed in future developments of the subject.

\medskip

\noindent \textbf{1. The inner behavior for nonsingular densities.}
As explained in the Introduction, there was an important
mathematical interest for the study of \eqref{eq2} with densities
$\varrho$ that are regular at $x=0$ and have $\varrho(x)\leq
C|x|^{-\gamma}$ as $|x|\to\infty$, see \cite{RV06, RV08, RV09,
KRV10, NR}. In particular, it has been shown that the large-time
behavior is strongly related to fundamental solutions to our Eq.
\eqref{eq1}. The density $\gamma=2$ is critical \cite{KRV10} and the
authors of \cite{NR} prove the large-time behavior in outer sets,
similar to our Theorem \ref{th.3}. The problem of understanding the
\emph{inner behavior} is left open. We don't know whether our
results in the present paper can give more light on this subject, as
we analyze the inner behavior for Eq. \eqref{eq1}, but the
difference between densities $\varrho$ as considered in
\cite[Subsection 2.1]{NR} and our singular $\varrho(x)=|x|^{-2}$
might play an essential role.

\medskip

\noindent \textbf{2. Removing a condition in Theorems \ref{th.2} and
\ref{th.2b}.} One would like to eliminate the condition $0\leq
u_0(x)\leq K$ from the statement of Theorems \ref{th.2}, \ref{th.2b}
and prove them for general $u_0\in L^{\infty}(\real^N)$; this would
allow functions that can have a peak in $\{0<|x|<\infty\}$ but still
satisfying
$$
\lim\limits_{|x|\to\infty}u_0(x)=0, \quad u_0(0)=K>0.
$$
We conjecture that Theorem \ref{th.2} and Theorem \ref{th.2b} remain
true in this case, that is, there exists some mechanism in the Eq.
\eqref{eq1} forcing a maximum attained at some point different from
the origin to go down in time. This is confirmed by the numerical
experiments, as Figure \ref{figure3} suggests.

\medskip

\begin{figure}[ht!]
  \begin{center}
  \includegraphics[width=12cm,height=8cm]{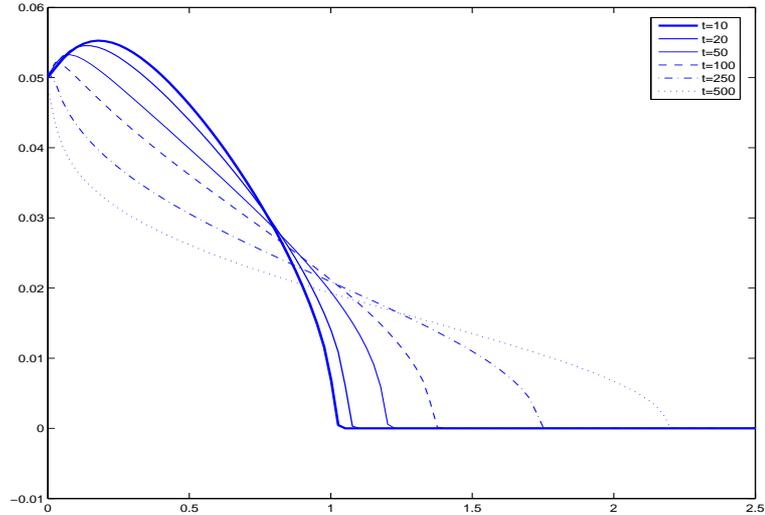}
  \end{center}
  \caption{Evolution of a solution with maximum attained outside the origin.}\label{figure3}
\end{figure}

\medskip

We can see that starting from a radially symmetric initial data with
a maximum attained for some $|x|=r(0)\in(0,\infty)$, and letting for
any $t>0$
$$
u(r(t),t)=\max\{u(|x|,t):x\in\real^N\},
$$
then $u(r(t),t)$ decreases with $t$, but at the same time
$r(t)\to0$. Thus, the slope of the graph in $[0,r(t)]$ may be quite
big for $t>0$ very large, which shows that, even if we expect the
same result as in Theorem \ref{th.2} to hold true, our technique
essentially based on a boundedness of the derivative cannot be used.
It seems that we need some different ideas.

\medskip

\noindent \textbf{3. Large time behavior for general solutions in
Theorem \ref{th.1b}.} While for solutions to \eqref{eq1} with
initial data $u_0(0)=K>0$, our analysis holds true for general (not
necessarily radially symmetric) solutions, this problem remains open
in the case when $u_0(0)=0$. Notice that a similar proof as that of
Theorem \ref{th.2b} does not hold, due to the essential dependence
of the limit profile on some weighted mass of the initial condition.
As suggested in \cite{IS12}, one might expect a negative answer.

\medskip

\textsc{Acknowledgements.} R. I. partially supported by the Spanish
project MTM2012-31103. A. S. partially supported by the Spanish
project MTM2011-25287. Part of this work has been done during visits
by A. S. to the Departamento de Análisis Matemático of the
Universidad de Valencia.

\bibliographystyle{plain}

\end{document}